\theoremstyle{plain}
\newtheorem{theorem}{Theorem}[section]
\newtheorem{proposition}[theorem]{Proposition}
\newtheorem{lemma}[theorem]{Lemma}
\newtheorem{problem}[theorem]{Problem}
\theoremstyle{definition}
\newtheorem{definition}[theorem]{Definition}
\theoremstyle{remark}
\newtheorem{remark}[theorem]{Remark}
\newcommand{\sheaf}[1]{\mathscr{#1}}
\newcommand{\OO}{\sheaf{O}}
\newcommand{\FF}{\sheaf{F}}
\newcommand{\sA}{\sheaf{A}}
\newcommand{\valuations}[1]{\mathsf{#1}}
\newcommand{\divisor}{\mathrm{div}}
\newcommand{\Z}{\mathbb Z}
\renewcommand{\P}{\mathbb P}
\newcommand{\Q}{\mathbb Q}
\newcommand{\Gm}{\mathbb{G}_{\mathrm{m}}}
\newcommand{\Gms}[1]{\mathbb{G}_{\mathrm{m},#1}}
\DeclareMathOperator{\Br}{\mathrm{Br}}
\DeclareMathOperator{\Spec}{\mathrm{Spec}}
\newcommand{\sep}{^{\mathrm{s}}}
\newcommand{\tensor}{\otimes}
\newcommand{\et}{\mathrm{\acute{e}t}}
\newcommand{\linedef}[1]{\textsl{#1}}
\newcommand{\Het}{H_{\et}}
\newcommand{\ur}{\mathrm{nr}}
\newcommand{\Hur}{H_{\ur}}
\newcommand{\Brur}{\Br_{\ur}}
\newcommand{\merk}{\mathrm{r}}
\newcommand{\Hr}{H_{\merk}}
\newcommand{\CH}{\mathrm{CH}}
\newcommand{\res}{\mathrm{res}}
\newcommand{\cores}{\mathrm{cor}}
\newcommand{\ord}{\mathrm{ord}}
\begin{document}


\title[Universal $\CH_0$-triviality and the Brauer group]{Universal triviality of the Chow group of $0$-cycles and the Brauer group}

\author[Auel]{Asher Auel}
\address{Asher Auel, Department of Mathematics, Yale University\\
New Haven, Connecticut 06511, USA}
\email{asher.auel@yale.edu}

\author[Bigazzi]{Alessandro Bigazzi}
\address{Alessandro Bigazzi, Mathematics Institute, University of Warwick\\
Coventry CV4 7AL, England}
\email{A.Bigazzi@warwick.ac.uk}

\author[B\"ohning]{Christian B\"ohning}
\address{Christian B\"ohning, Mathematics Institute, University of Warwick\\
Coventry CV4 7AL, England}
\email{C.Boehning@warwick.ac.uk}

\author[Bothmer]{Hans-Christian Graf von Bothmer}
\address{Hans-Christian Graf von Bothmer, Fachbereich Mathematik der Universit\"at Hamburg\\
Bundesstra\ss e 55\\
20146 Hamburg, Germany}
\email{hans.christian.v.bothmer@uni-hamburg.de}

\date{\today}


\begin{abstract}
We prove that a smooth proper universally $\CH_0$-trivial variety $X$
over a field $k$ has universally trivial Brauer group.  This fills a
gap in the literature concerning the $p$-torsion of the Brauer group
when $k$ has characteristic~$p$.
\end{abstract}

\maketitle

\section{Introduction}

Our main motivation for considering the Chow group of $0$-cycles and
the Brauer group is the rationality problem in algebraic geometry.
Both groups are stable birational invariants of smooth projective
varieties that have been used to great success in obstructing stable
rationality for various classes of algebraic varieties.  The Brauer
group, and more generally, unramified cohomology groups, have been
used most notably in the context of the L\"uroth problem and in
Noether's problem, see \cite{A-M72}, \cite{Sa77}, \cite{Bogo87},
\cite{CTO}, \cite{CT95}, \cite{Bogo05}, \cite{Pey08}.  The Chow group
of $0$-cycles has gained increased attention since the advent of the
degeneration method due to Voisin~\cite{Voi15}, and further developed
by Colliot-Th\'{e}l\`{e}ne and Pirutka~\cite{CT-P16}, which unleased a
torrent of breakthrough results proving the non stable rationality of
many types of conic bundles over rational bases \cite{HKT15}, \cite{A-O16}, \cite{ABBP16},
\cite{BB16}, hypersurfaces of not too
large degree in projective space \cite{To16}, \cite{Sch18}, and many
other geometrically interesting classes of rationally connected
varieties, e.g., \cite{CT-P16}, \cite{HT16}, \cite{HPT16}.  See
\cite{Pey16} for a survey of many of these results.  This method
relies on the good specialisation properties of the \linedef{universal
triviality} of the Chow group of $0$-cycles for both
equicharacteristic and mixed characteristic degenerations.  The Brauer
group can be, and has been, used as a tool to obstruct the universal
triviality of the Chow group of $0$-cycles, principally using
equicharacteristic degenerations; our goal is to show that even for
mixed characteristic degenerations to positive characteristic $p$,
Brauer classes of $p$-primary torsion can still obstruct the universal
triviality of the Chow group of $0$-cycles.

Let $X$ be a smooth proper scheme over a field $k$, with $\CH_0(X)$
its Chow group of $0$-cycles and $\Br(X)=\Het^2(X,\Gm)$ its
(cohomological) Brauer group.  Following \cite{ACTP16} (cf.\
\cite{CT-P16}), we say that $X$ is \linedef{universally
$\CH_0$-trivial} if the degree map $\CH_0(X_F) \to \Z$ is an
isomorphism for every field extension $F/k$, and that the Brauer group
of $X$ is \linedef{universally trivial} if the natural map $\Br(F) \to
\Br(X_{F})$ is an isomorphism for every field extension $F/k$.  A
variety over a field $k$ is an integral scheme $X$, separated and of
finite type over $k$.  In this article we give a proof of the following result.


\begin{theorem}
\label{tMain}
Let $X$ be a smooth proper variety over a field $k$.  If $X$ is
universally $\CH_0$-trivial, then the cohomological Brauer group of
$X$ is universally trivial.
\end{theorem}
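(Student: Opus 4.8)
The plan is to deduce the theorem from its non-universal special case and then to evaluate Brauer classes against zero-cycles over the function field. First, observe that if $X$ is smooth, proper and universally $\CH_0$-trivial over $k$, then so is $X_F$ over $F$ for every field extension $F/k$: smoothness, properness and the $\CH_0$-condition all pass to base changes, and $X_F$ is integral because $X$ is geometrically connected — were $X_{\bar k}$ to split into $r\ge 2$ components, $\CH_0(X_{\bar k})$ would surject onto a free abelian group of rank $r$, contradicting $\CH_0(X_{\bar k})\isom\Z$ — and smooth, hence regular and connected, hence integral. So it suffices to prove: if $X$ is smooth, proper and universally $\CH_0$-trivial over $k$, then $p^{*}\colon\Br(k)\to\Br(X)$ is an isomorphism, where $p\colon X\to\Spec k$; applying this with $F$ in place of $k$ then gives the theorem. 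Injectivity is formal: as $\CH_0(X)\isom\Z$ there is a zero-cycle $z_0=\sum_i n_i[x_i]$ on $X$ of degree one, and for $\gamma\in\Br(k)$ one has $\sum_i n_i\cores_{\kappa(x_i)/k}\!\bigl((p^{*}\gamma)|_{x_i}\bigr)=\bigl(\sum_i n_i[\kappa(x_i):k]\bigr)\gamma=\gamma$, so $p^{*}\gamma=0$ forces $\gamma=0$. Since $X$ is regular, Grothendieck's theorem gives an injection $\Br(X)\hookrightarrow\Br(k(X))$, so for surjectivity it is enough to show that the image of an arbitrary $\beta\in\Br(X)$ in $\Br(k(X))$ lies in the image of $\Br(k)$.

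To this end, set $E:=k(X)$, let $q\colon X_E:=X\times_k E\to X$ be the projection, and put $\beta_E:=q^{*}\beta$. The generic point of $X$ determines a canonical $E$-point $\delta\in X_E(E)$ — the diagonal point — which is closed (as $X_E$ is separated over $E$) and satisfies $q(\delta)=$ the generic point of $X$; therefore $\beta_E|_\delta\in\Br(E)$ is exactly the image of $\beta$ under $\Br(X)\hookrightarrow\Br(E)$. Let $z_{0,E}:=q^{*}z_0\in Z_0(X_E)$, a zero-cycle of degree one over $E$. Since $\CH_0(X_E)\xrightarrow{\deg}\Z$ is an isomorphism (universal $\CH_0$-triviality, for the extension $E/k$), we obtain $[\delta]=z_{0,E}$ in $\CH_0(X_E)$ — which is precisely a generic decomposition, along the first factor, of the diagonal of $X$. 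Now apply the evaluation pairing $\langle\,[y],\gamma\,\rangle=\cores_{\kappa(y)/E}(\gamma|_y)$ on $\CH_0(X_E)\otimes\Br(X_E)$ to $\beta_E$:
\[
\bigl(\text{image of }\beta\text{ in }\Br(E)\bigr)\;=\;\langle[\delta],\beta_E\rangle\;=\;\langle z_{0,E},\beta_E\rangle\;=\;\res_{E/k}(\gamma_0),
\]
where $\gamma_0:=\sum_i n_i\cores_{\kappa(x_i)/k}(\beta|_{x_i})\in\Br(k)$ and the last equality is the base-change (projection) formula for corestriction, applied to the Cartesian squares $\Spec(\kappa(x_i)\otimes_k E)\to\Spec\kappa(x_i)$ and $\Spec E\to\Spec k$. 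Thus the image of $\beta$ in $\Br(E)$ equals that of $p^{*}\gamma_0$, and injectivity of $\Br(X)\hookrightarrow\Br(E)$ yields $\beta=p^{*}\gamma_0$, completing the proof modulo one point.

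That point — the only step above that is not a formal manipulation of $\res$ and $\cores$ — is that the evaluation pairing $\CH_0(Y)\otimes\Br(Y)\to\Br(E)$ is well defined on rational equivalence, for $Y$ smooth and proper over a field $E$. Unwinding rational equivalence through normalisations of curves, this reduces to a reciprocity law on a proper regular curve $\tilde C$ over $E$: for $\alpha\in\Br(\tilde C)$ and $f\in E(\tilde C)^{\times}$ one must have $\sum_{c\in\tilde C^{(0)}}\ord_c(f)\,\cores_{\kappa(c)/E}(\alpha|_c)=0$ in $\Br(E)$. For the part of $\Br$ of order prime to $p:=\operatorname{char}E$ this is classical — Rost's cycle-module formalism for Galois cohomology with $(\Q/\Z)'(\bullet)$-coefficients, equivalently Bloch--Ogus--Gabber for the étale sheaves $\mu_n^{\otimes\bullet}$. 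The delicate case, and the gap this paper addresses, is the $p$-primary part in characteristic $p$: here one argues through the logarithmic de Rham--Witt sheaves $W_n\Omega^{\bullet}_{Y,\log}$. Concretely, the fppf Kummer sequence for $\mu_{p^n}$ together with the comparison of fppf cohomology of $\mu_{p^n}$ with étale cohomology of $W_n\Omega^{\bullet}_{\log}$ expresses $\Br(Y)[p^n]$ via $\Het^{1}(Y,W_n\Omega^{1}_{Y,\log})$; the Bloch--Kato--Gabber theorem identifies the stalks of $W_n\Omega^{j}_{\log}$ with Milnor $K$-theory $K^{M}_{j}/p^n$; and a Gersten-type resolution of $W_n\Omega^{j}_{X,\log}$ on smooth varieties, combined with duality for these sheaves on curves, produces the reciprocity law for the $p$-part.

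I expect the main obstacle to be exactly this $p$-adic input — and, within it, having the Gersten resolution and the curve duality at one's disposal over an \emph{arbitrary}, possibly imperfect, ground field, since the available references (Gros--Suwa and the like) typically assume a perfect base, whereas the fields $E=k(X)$, $F$ and $\kappa(x_i)$ appearing above are almost never perfect. This imperfection cannot be circumvented by passing to $k^{\mathrm{perf}}$: unlike the prime-to-$p$ situation, the map $\Br(k)\to\Br(k^{\mathrm{perf}})$ need not be injective on $p$-torsion. Everything else — the reduction to $\Br(k)\isom\Br(X)$, Grothendieck's injectivity $\Br(X)\hookrightarrow\Br(k(X))$, and the corestriction bookkeeping — is routine.
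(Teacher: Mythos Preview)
Your global strategy --- evaluate Brauer classes on zero-cycles, pass to the function field, and compare the diagonal point with the base-changed degree-one cycle --- is exactly the paper's. The divergence is entirely in how you justify the one step you yourself flag as the crux: that the evaluation pairing descends to $\CH_0$, i.e., the reciprocity law on a proper regular curve $C$ over an arbitrary field $E$.

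You propose to run this through the logarithmic de Rham--Witt complex, Bloch--Kato--Gabber, a Gersten resolution for $W_n\Omega^{\bullet}_{\log}$, and curve duality, and you correctly note that the available references typically assume a perfect base while the fields that actually arise here ($k(X)$, $\kappa(x_i)$, etc.) are essentially never perfect. You do not resolve this; you only locate it. That is precisely the gap the paper is written to fill, so as a proof your proposal is incomplete at the decisive point. More importantly, the paper's argument shows that this entire apparatus is unnecessary: the reciprocity law (Proposition~\ref{pWeilRecAnalogue}) is proved by an elementary reduction to $\P^1_E$. For $C=\P^1_E$ one has $\Br(\P^1_E)=\Br(E)$, so the pairing with $\mathrm{div}(f)$ is $\deg(\mathrm{div}(f))\cdot\alpha'=0$. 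For general $C$, a nonconstant $f$ defines a finite flat cover $\varphi_f\colon C\to\P^1_E$ with $\varphi_f^{*}(\mathrm{id})=f$, and an adjunction/projection formula for the corestriction (Lemma~\ref{lAdjointness}) gives $\langle f,\alpha\rangle=\langle \mathrm{id},(\varphi_f)_{*}\alpha\rangle=0$. No residues, no Gersten complex, no perfectness hypothesis. This is the genuinely new input, and it is both simpler and more general than what you sketch.

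There is a second, smaller gap in your bookkeeping. When you write $\langle z_{0,E},\beta_E\rangle=\res_{E/k}(\gamma_0)$ ``by the base-change formula for corestriction'', you are implicitly identifying restriction-then-corestriction along $\Spec(\kappa(x_i)\otimes_kE)\to\Spec E$ with the pairing against the pulled-back \emph{cycle} $q^{*}[x_i]$. If $\kappa(x_i)/k$ is inseparable, $\kappa(x_i)\otimes_kE$ is non-reduced, and one must check that corestriction from the thickened point equals the length times corestriction from the reduced point; this is not automatic (it is essentially the content of Step~3 in Lemma~\ref{lAdjointness}). The paper sidesteps the issue entirely by invoking Gabber--Liu--Lorenzini to choose $z_0$ supported on closed points with \emph{separable} residue fields, so that $z_0$ pulls back to a reduced zero-cycle on $X_{k(X)}$ and the comparison is immediate. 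You should either insert that move or supply the non-reduced computation explicitly.
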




When $k$ has characteristic zero (more generally, for torsion in the
Brauer group prime to the characteristic), this is a result of
Merkurjev~\cite[Thm.~2.11]{Mer08}, which we review below.
Theorem~\ref{tMain} is claimed in \cite[Thm.~1.12(iii)(b)]{CT-P16},
but as it relies on the results in \cite{Mer08}, it is only proved for
torsion prime to the characteristic.  Our proof, which covers the case
of $p$-primary torsion in the Brauer group when $k$ has characteristic
$p>0$, follows a simplified version of an argument in \cite{Mer08}
utilising a pairing between the Chow group of $0$-cycles and the
Brauer group, but with several nontrivial new ingredients.

The result of Merkurjev~\cite{Mer08} is that for a smooth proper
variety $X$ over a field~$k$, the universal triviality of $\CH_0$ is
equivalent to the condition that for all Rost cycle modules $M$ (see
\cite{Ro96}) and all field extensions $F/k$, the subgroup
$M_\ur(F(X)/F) \subset M(F(X))$ of unramified elements of the function
field $F(X)$, is trivial, meaning that the natural map $M(F) \to
M_\ur(F(Y)/F)$ is an isomorphism.  In particular, taking $M$ as Galois
cohomology with finite torsion coefficients $\bbmu_l$, where~$l$ is
prime to the characteristic of $k$, the group of unramified elements
is precisely the usual unramified cohomology groups $\Hur^{i}(F(Y)/F,
\bbmu_l^{\otimes {i-1}})$, see \cite{Se97}, \cite{CTO}, \cite{CT95},
\cite{GMS03}, \cite{Bogo05}, \cite{GS06}, \cite{Pey08}. For $i=2$, one
gets the $l$-torsion in the (unramified) Brauer group
$\Br_\ur(F(Y))[l]$.  In particular, the presence of nontrivial
unramified classes obstruct the universal triviality of $\CH_0$.
Differential forms in positive characteristic, used with mixed
characteristic degenerations, provide a different type of obstruction
to the universal triviality of $\CH_0$, see \cite{To16}. For new
applications of Theorem~\ref{tMain} to the rationality problem,
see~\cite{ABBB18}.

Now suppose that $k$ has characteristic $p>0$, and we consider the
$p$-torsion $\Br(X)[p]$ in the Brauer group of $X$.  In fact,
$\Br(X)[p]$ is no longer a group of unramified elements in any graded
piece of any Rost cycle module, since residue maps are not generally
defined, see \cite[Intro.]{Mer15}, \cite[App.~A]{GMS03}.  Hence one
can no longer appeal directly to \cite{Mer08} to deduce the triviality
of $\Br(X)[p]$ under the assumption that $X$ is universally
$\CH_0$-trivial.  However, $\Br(X)[p]$ is isomorphic to the subgroup
of ``unramified classes'' in $H^2(k(X),\Z/p(1))$, where $\Z /p (j)$ is
defined as in \cite{Ka86} via the logarithmic part of the de
Rham--Witt complex.  More generally, we have the following.

\begin{problem}\label{pBr}
Let $X$ be a smooth proper variety over an algebraically closed field
$k$ of characteristic $p>0$.  Assume that $X$ is universally
$\CH_0$-trivial.  Is the subgroup of ``unramified classes'' in the
Galois cohomology group $H^i (k(X), \Z /p (j))$, i.e., those coming
from $\Het^i(\Spec(A), \Z /p (j))$ for every
discrete valuation ring $A$ with fraction field $k(X)$, trivial?
\end{problem}

Our main result gives a positive solution to Problem~\ref{pBr} when
${i=2}$ and ${j=1}$, namely, for the $p$-torsion in the Brauer group.
Note that Problem \ref{pBr} is of a similar type as many problems that
crystalline cohomology was developed for, see \cite{CL98}.

\medskip

\noindent {\bf Acknowledgments.} We would like to thank Jean-Louis
Colliot-Th\'{e}l\`{e}ne and Burt Totaro for their interest in this
work, their helpful suggestions and pointers to the literature. We
would also like to thank the Simons Foundation for hosting the
Conference on Birational Geometry August 2017 during which part of
this work was presented and several participants gave us useful
feedback.

\section{Background on Brauer groups}\label{sBrauerGroups}

All schemes below will be assumed to be Noetherian and $k$ will denote
a field unless otherwise specified.  For a scheme $X$, we will denote
by $\Br(X) = \Het^2(X,\Gm)$ the cohomological Brauer group of $X$, the
\'etale cohomology group of $X$ with values in the sheaf of units $\Gms{X}$.
For a commutative ring $A$, we also write $\Br(A)$ for the
cohomological Brauer groups of $\Spec\, A$.

The usual Brauer group of Azumaya algebras on $X$, considered up to
Morita equivalence, is a subgroup of $\Het^2(X,\Gm)$.  If $X$ is
quasi-projective over a base ring (more generally, admits an ample
invertible sheaf), then it is a result of Gabber~\cite{deJ03} that
these two groups coincide.  The analogous result over a smooth proper
variety over a field is still unknown.


If $X$ is integral with function field $K$, then restriction to the
generic point induces a map $\Br(X) \to \Br(K)$, which is an injection
when $X$ is regular (more generally, geometrically locally factorial),
see \cite[II,~Prop.~1.4]{Gro68}.

We now summarize some functorial properties of the Brauer group that
we need.

\begin{proposition}\label{pAllAuxiliary}
The cohomological Brauer group has the following properties.
\begin{enumerate}
\item\label{iPullBack} For any morphism $g : X \to Y$, there is an
induced pull-back map
\[
g^*  : \Br (Y) \to \Br (X).
\]

\item\label{iPushForward} For a finite flat morphism $f : Y' \to Y$,
there is a push-forward or corestriction map
\[
\mathrm{cor}_{Y'/Y} : \Br(Y') \to \Br(Y),
\]
also denoted by $f_*$ or $\mathrm{Nm}_{Y'/Y}$, which satisfies the
following properties:
\begin{enumerate}
\item[\textit{i)}] For a composition of finite flat morphisms $Y''\to
Y'\to Y$, we have
\[
\mathrm{cor}_{Y''/Y}=\mathrm{cor}_{Y'/Y}\circ \mathrm{cor}_{Y''/Y'}.
\]

\item[\textit{ii)}] For a finite flat morphism $f : Y' \to Y$ of
degree $d$ and $\alpha \in \Br(Y)$, we have $$f_*f^*\alpha = d\alpha.$$

\item[\textit{iii)}] Given a finite flat morphism $f : Y' \to Y$, a
morphism $g : X \to Y$, and a fiber square
\[
\xymatrix{
X' \ar[r]^{g'} \ar[d]^{f'} & Y' \ar[d]^f \\
X \ar[r]^g & Y .
}
\] 
then for any $\beta \in \Br(Y')$, we have
\[
g^* f_* \beta =  (f')_* (g')^* \beta .
\]
\end{enumerate}
\end{enumerate}
\end{proposition}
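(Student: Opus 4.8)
The plan is to deduce all of the stated properties from two standard inputs: the exactness of pushforward along finite morphisms in the \'etale topology, and the norm morphism of \'etale sheaves attached to a finite locally free morphism. Property (a) is immediate from functoriality of \'etale cohomology: for any $g\colon X\to Y$ there is a canonical identification $g^{*}\Gms{Y}=\Gms{X}$, so applying $g^{*}$ to degree-$2$ cohomology of $\Gm$ gives a map $\Het^{2}(Y,\Gm)\to\Het^{2}(X,\Gm)$, and contravariant functoriality in $g$ is formal. For (b), since $Y$ is Noetherian a finite flat $f\colon Y'\to Y$ is finite locally free (its rank is locally constant on $Y$, and we reduce to the case of constant rank $d$). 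On a variable \'etale $Y$-scheme $T$ one sends a unit $u$ on $Y'\times_{Y}T$ to the determinant of multiplication by $u$ on the finite locally free rank-$d$ module $(f_{*}\OO_{Y'})\tensor_{\OO_{Y}}\OO_{T}$; this is functorial in $T$ and defines the norm morphism $\Norm_{Y'/Y}\colon f_{*}\Gms{Y'}\to\Gms{Y}$ of \'etale sheaves on $Y$ (the classical norm of invertible modules, cf.\ \cite{Gro68}). Since $f$ is finite, hence affine, $f_{*}$ is exact on abelian \'etale sheaves with $R^{q}f_{*}=0$ for $q>0$, so the Leray spectral sequence yields canonical isomorphisms $\Het^{i}(Y',\FF)\xrightarrow{\sim}\Het^{i}(Y,f_{*}\FF)$; one then \emph{defines} $\cores_{Y'/Y}\colon\Br(Y')\to\Br(Y)$ to be the composite of the Leray isomorphism $\Het^{2}(Y',\Gm)\xrightarrow{\sim}\Het^{2}(Y,f_{*}\Gms{Y'})$ with the map induced by $\Norm_{Y'/Y}$.

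The three sub-properties then follow by transporting the corresponding facts about the norm of modules through the Leray isomorphisms. For (i), transitivity of the norm gives $\Norm_{Y''/Y}=\Norm_{Y'/Y}\circ f_{*}(\Norm_{Y''/Y'})$ as morphisms $(ff')_{*}\Gms{Y''}=f_{*}f'_{*}\Gms{Y''}\to\Gms{Y}$, which together with the compatibility of the Leray isomorphisms under composition of finite morphisms gives $\cores_{Y''/Y}=\cores_{Y'/Y}\circ\cores_{Y''/Y'}$. For (ii), the composite of the adjunction unit $\Gms{Y}\to f_{*}f^{*}\Gms{Y}=f_{*}\Gms{Y'}$ with $\Norm_{Y'/Y}$ is multiplication by $d$, because the determinant of multiplication by a scalar on a rank-$d$ module is its $d$-th power; identifying, through the Leray isomorphism, the pull-back $f^{*}$ on $\Het^{2}$ with the map induced by this adjunction unit then gives $f_{*}f^{*}\alpha=d\alpha$. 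For (iii), since $f$ is finite the formation of $f_{*}$ on the small \'etale site commutes with the arbitrary base change $g$, giving a canonical isomorphism $g^{*}f_{*}\FF\xrightarrow{\sim}f'_{*}(g')^{*}\FF$; the determinant construction commutes with this base change, so the isomorphism intertwines $g^{*}(\Norm_{Y'/Y})$ with $\Norm_{X'/X}$, and applying $\Het^{2}$ and combining with the base-change compatibility of the Leray isomorphisms for $f$ and $f'$ yields $g^{*}f_{*}\beta=(f')_{*}(g')^{*}\beta$.

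The substantive inputs here, all standard, are: the existence, functoriality, transitivity, and base-change compatibility of the norm morphism $f_{*}\Gms{Y'}\to\Gms{Y}$; and the fact that finite pushforward on the small \'etale site is exact and commutes with arbitrary base change --- the latter needed for the \emph{non-torsion} sheaf $\Gm$, so that one uses not the proper base change theorem but the elementary computation of $f_{*}$ for integral $f$ in terms of the fibres. Granting these, the three properties are diagram chases with Leray edge maps, and I expect the only delicate point to be matching $f^{*}$ (respectively the sheaf-level base-change isomorphism) on cohomology with the corresponding map of sheaves through the Leray isomorphism, i.e.\ verifying commutativity of the relevant squares of edge morphisms.
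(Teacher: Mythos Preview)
Your proposal is correct and follows essentially the same approach as the paper: both construct $\cores_{Y'/Y}$ as the composite of the Leray isomorphism $\Het^{2}(Y',\Gm)\simeq\Het^{2}(Y,f_{*}\Gms{Y'})$ (using $R^{i}f_{*}=0$ for finite $f$) with the map induced by the norm homomorphism $f_{*}\Gms{Y'}\to\Gms{Y}$, and both derive \textit{i)}, \textit{ii)}, \textit{iii)} from transitivity of the norm, the fact that the composite with the adjunction unit is the $d$th power map, and compatibility of the norm and Leray edge maps with base change. Your write-up is in fact slightly more explicit than the paper's (e.g.\ spelling out the determinant construction and the reason base change holds for $f_{*}$ on the \'etale site), but there is no substantive difference in method.
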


\begin{proof}
Part \ref{iPullBack}) is obvious by the contravariance of \'etale
cohomology. Part \ref{iPushForward}) is due to
Deligne~\cite[Exp.~XVII~6.3.13]{SGA4}, see also
\cite[Thm.~1.4]{ISZ11}, \cite[p.~453]{Preu13}.  For any finite flat
morphism $f : Y' \to Y$, there exists a natural homomorphism
$f_*\Gms{Y'} \to \Gms{Y}$ of sheaves of abelian groups on $Y$, cf.\
\cite[Lecture~10]{Mum66}.  Taking \'etale cohomology, we arrive at a
homomorphism
\[
\gamma : \Het^2 (Y', f_*\Gms{Y'} ) \to \Het^2(Y, \Gm ).
\]
The Leray spectral sequence for the map $f : Y' \to Y$ yields a map
\[ 
\iota  : \Het^2 (Y, f_*\Gms{Y'} ) \to \Het^2 (Y', \Gm),
\]
which is an isomorphism because $R^i f_* \Gms{Y'} = 0$ for all $i>0$
as $f$ is finite. Putting $\mathrm{cor}_{Y'/Y} := \gamma \circ
\iota^{-1}$, we obtain a corestriction map on Brauer groups. This has
property \textit{i)} by construction, \textit{ii)} since the
composition $\Gms{Y} \to f_*\Gms{Y'} \to \Gms{Y}$ with the unit of adjunction is
multiplication by the degree of $f$, and \textit{iii)} by the
compatibility of the norm map with base change, of the Leray spectral
sequence with pull-backs, and of the Leray spectral sequence with
change of coefficients, see also \cite[\S4]{Preu13}.
\end{proof}

\begin{definition}\label{dValueBrauerOnPoint}
If $i : x \hookrightarrow X$ is the inclusion of a point
with residue field $k (x)$ into $X$, then for an $\alpha \in \Br(X)$
we denote $i^* (\alpha ) \in \Br(k (x))$ simply by $\alpha (x)$ and
call it the value of $\alpha$ at $x$.
\end{definition}

\begin{definition}\label{dNormRatFunct}
Let $\varphi : X \to Y$ be a finite surjective morphism of
$k$-varieties, and let $f\in k(X)$ and $g \in k(Y)$ be rational
functions.  We denote by $\varphi^* (g)$ the pull-back of the rational
function $g$ to $X$, and by $\mathrm{Nm}_{X/Y}(f)$, or $\varphi_*(f)$,
the norm of the rational function $f$ to $Y$. The later is defined as
follows: via $\varphi^*$, there is a finite extension of fields $k(X)
/ k(Y)$, and $\mathrm{Nm}_{X/Y}(f)$ is the determinant of the
endomorphism of the $k(Y)$-vector space $k(X)$ that is given by
multiplication by
$f$. 
\end{definition}

In applications, when one wants to determine $\Br(X)$ for some smooth
projective $k$-variety $X$, one is frequently only given some singular
model of $X$ a priori: it thus becomes desirable, especially since
some models of such $X$ can be highly singular and difficult to
desingularise explicitly, to determine $\Br(X)$ in terms of data
associated to the function field $k(X)$ only. This is really the idea
behind unramified invariants as for example in \cite{Bogo87},
\cite{CTO}, \cite{CT95}. We have an inclusion
\begin{displaymath}\label{fInclusionBrauer}
\Br(X) \subset \Br(k(X))
\end{displaymath}
by \cite[II,~Cor.~1.10]{Gro68}, given by pulling back to the generic
point of $X$. One wants to single out the classes inside $\Br(k(X))$
that belong to $\Br(X)$ in valuation-theoretic terms. Since the basic
reference \cite{CT95} for this often works under the assumption that
the torsion orders of the classes in the Brauer group be coprime to
the characteristic of $k$, we state and prove below a result in the
generality we need here, although most of its ingredients are
scattered in the available literature.

Basic references for valuation theory are \cite{Z-S76} and \cite{Vac06}. Valuation here without modifiers such as ``discrete rank $1$" etc. means a general Krull valuation.

\begin{definition}\label{dValBrauer}
Let $X$ be a smooth proper variety over a field $k$ and let
$\valuations{S}$ be a subset of the set of all Krull valuations of the
function field $k(X)$ of $X$. All the valuations we will consider
below will be geometric in the sense that they are assumed to be
trivial on the ground field $k$. For $v\in \valuations{S}$, we denote by
$A_v\subset k(X)$ the valuation ring of $v$. Then we denote by
$\Br_{\valuations{S}}(k(X)) \subset \Br(k(X))$ the set of all those
Brauer classes $\alpha\in \Br(k(X))$ that are in the image of the
natural map $\Br(A_v) \to \Br(k(X))$ for all $v \in
\valuations{S}$. Specifically, we will consider the following examples of
sets $\valuations{S}$.
\begin{enumerate}
\item The set $\valuations{DISC}$ of discrete rank $1$ valuations of
$k(X)$ with fraction field $k(X)$.

\item The set $\valuations{DIV}$ of all divisorial valuations of
$k(X)$ corresponding to some prime divisor $D$ on a model $X'$ of
$k(X)$, where $X'$ is assumed to be generically smooth along $D$.

\item The set $\valuations{DIV/}X$ of all divisorial valuations of $k(X)$
corresponding to a prime divisor on $X$.
\end{enumerate}
We denote the corresponding subgroups of $\Br(k(X))$ by 
\[
\Br_{\valuations{DISC}}(k(X)),\: \Br_{\valuations{DIV}}(k(X)),\: \Br_{\valuations{DIV/}X}(k(X))
\]
accordingly. In addition, we define 
$$
\Br_{\valuations{LOC}}(k(X))
$$ 
as those classes in $\Br(k(X))$ coming from $\Br(\mathcal{O}_{X,x})$
for every (scheme-)point $x\in X$.
\end{definition}

Note the containments $\valuations{DISC} \supset \valuations{DIV} \supset \valuations{DIV/}X$,
which are all strict in general: for the first, recall that divisorial
valuations are those discrete rank $1$ valuations $v$ with the
property that the transcendence degree of their residue field is $\dim
X -1$ \cite[Ch.~VI,~\S14,~p.~88]{Z-S76}, \cite[\S1.4,~Ex.~5]{Vac06};
and that there are discrete rank $1$ valuations that are not
divisorial, for example, so-called analytic arcs \cite[Ex.~8(ii)]{Vac06}.

\begin{theorem}\label{tComparisonBrauer}
Let $X$ be a smooth proper variety over a field $k$.  Then all of
the natural inclusions
\[
\Br(X) \subset \Br_{\valuations{DISC}}(k(X)) \subset \Br_{\valuations{DIV}}(k(X)) \subset \Br_{\valuations{DIV/}X}(k(X))
\]
are equalities.
\end{theorem}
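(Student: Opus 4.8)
The four groups sit in a descending chain
\[
\Br(X) \subset \Br_{\valuations{DISC}}(k(X)) \subset \Br_{\valuations{DIV}}(k(X)) \subset \Br_{\valuations{DIV/}X}(k(X)),
\]
so it suffices to prove the reverse inclusion $\Br_{\valuations{DIV/}X}(k(X)) \subset \Br(X)$; all intermediate groups are then squeezed and equal. My plan is to show directly that a class $\alpha \in \Br(k(X))$ which extends over $\OO_{X,x}$ for every codimension-$1$ point $x \in X$ already lies in $\Br(X) = \Het^2(X,\Gm)$. Note that $\Br_{\valuations{DIV/}X}(k(X))$ is exactly $\bigcap_{x \in X^{(1)}} \im(\Br(\OO_{X,x}) \to \Br(k(X)))$, since a prime divisor $D \subset X$ (with $X$ smooth, hence regular) gives the divisorial valuation $v_D$ whose valuation ring is the DVR $\OO_{X,\eta_D}$ at the generic point $\eta_D$ of $D$.

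\textbf{Key steps.} First I would reduce to a purely local-cohomological statement. Because $X$ is smooth over $k$, it is regular, so $\Br(X) \hookrightarrow \Br(k(X))$ is injective (as recalled in the excerpt, \cite[II,~Cor.~1.10]{Gro68}), and I want to identify its image. The standard tool is the Gersten-type / purity analysis: for a regular integral scheme $X$ with generic point $\Spec k(X) \xrightarrow{j} X$, one has an exact sequence relating $\Het^2(X,\Gm)$, $\Br(k(X))$, and the sum over codimension-$1$ points $x$ of the "residue" groups $\Het^1(k(x),\Q/\Z)$ (or $\Het^3_x(X,\Gm)$ in general). Concretely, a class $\alpha \in \Br(k(X))$ lies in $\Br(X)$ if and only if its image in $\Het^3_x(X,\Gm) \cong \Het^1(k(x),\Q/\Z)$ (for $x$ of codimension $1$; this last identification uses absolute cohomological purity in the residue characteristic, or Gabber's purity theorem, and in the wildly ramified case it is subtler) vanishes for every codimension-$1$ point, plus a higher-codimension vanishing that is automatic. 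The assertion $R^i j_* \Gm$ is concentrated in low degrees with $R^1 j_* \Gm$ supported on codimension-$1$ points reduces the problem to codimension $1$: this is the heart of the "$\valuations{DISC} = $ codim $1$" collapse. I would run the local-to-global (coniveau) spectral sequence $E_1^{p,q} = \bigoplus_{x \in X^{(p)}} \Het^{q-p}_x$ converging to $\Het^\ast(X,\Gm)$, and extract from it that $\Br(X) = \ker\bigl(\Br(k(X)) \to \bigoplus_{x \in X^{(1)}} \Het^3_x(X,\Gm)\bigr)$, while the condition of extending over $\OO_{X,x}$ for a single $x$ of codimension $1$ is equivalent to the vanishing of that one residue; intersecting over all such $x$ gives the kernel, hence $\Br(X)$.

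\textbf{Main obstacle.} The delicate point in characteristic $p$, and the whole reason the theorem is being re-proved here, is that absolute cohomological purity for $\Gm$-coefficients (equivalently the computation of $\Het^i_x(X,\Gm)$ in terms of the residue field of $x$) is classical only for the prime-to-$p$ part; for the $p$-part one cannot invoke the usual purity/Gersten statements, and residue maps on $\Br$ are not literally defined (cf.\ \cite[App.~A]{GMS03} cited above). So the hard part is establishing, for $X$ smooth over $k$ and $x \in X$ of codimension $1$, that the "$\alpha$ extends over the DVR $\OO_{X,x}$" condition is detected by a well-behaved local invariant, and that the collapse "codim $\geq 2$ contributes nothing" survives integrally. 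I expect to handle this either by (i) working with the $\Gm$-sheaf directly via the theorem on the cohomological dimension of the local ring and a dévissage onto the punctured spectrum, using that $\OO_{X,x}$ is a regular local ring so $\Br(\OO_{X,x}) \to \Br(k(X))$ is injective with image exactly the $\alpha$ unramified at $x$ — a purity statement for the Brauer group of a DVR that holds without restriction on the residue characteristic (Auslander–Goldman / Grothendieck for the tame part, and for a DVR the full statement $\Br(A) = \Br(K) \cap \bigcap \dots$ is available) — together with (ii) the fact that for a regular scheme, being in $\Br(\OO_{X,x})$ for all codimension-one $x$ forces being in $\Br(\OO_{X,x})$ for all $x$ by the "$S2$" / Hartogs-type extension for $\Het^2(-,\Gm)$ on regular schemes, and finally (iii) patching these local extensions into a global class by the injectivity of $\Br(X) \to \prod_x \Br(\OO_{X,x})$ and a Mayer–Vietoris / sheafification argument on the étale site, i.e., showing $\Br(X) = \Gamma(X, \mathscr{B}r)$ where $\mathscr{B}r$ is the Zariski sheafification of $U \mapsto \Br(U)$, which is already known to be "unramified" in the appropriate sense. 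The cleanest route is probably to cite the known purity-for-Brauer-of-regular-schemes statements valid in all characteristics and then only do the coniveau bookkeeping; the genuinely new input over \cite{CT95} is simply the verification that those purity statements do not require the prime-to-$p$ hypothesis when phrased at the level of $\Gm$ rather than of torsion sheaves.
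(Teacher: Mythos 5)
Your proposal is correct and, despite the coniveau-spectral-sequence framing of the ``key steps,'' lands on essentially the same argument as the paper: reduce to showing $\Br_{\valuations{DIV/}X}(k(X)) \subset \Br(X)$, use the hypothesis at the codimension-one local rings together with a Mayer--Vietoris/gluing argument (relying on Grothendieck's injectivity $\Br(X)\hookrightarrow \Br(k(X))$ for regular $X$) to extend the class to an open subset whose complement has codimension $\ge 2$, and then invoke purity for the cohomological Brauer group in a form valid for $p$-torsion in characteristic $p$ (Gabber, Cesnavicius) --- exactly the two ingredients you isolate in points (i)--(iii). The only caution is that the ``higher-codimension vanishing'' is not automatic but is precisely that purity input, a point you do acknowledge and resolve in the same way the paper does.
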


To prove this, we need two preliminary results.  The first is a purity
result for the cohomological Brauer group of a variety over a field.

\begin{theorem}\label{tGabber}
Let $V$ be a smooth $k$-variety, and let $U \subset V$ be an open
subvariety such that $V-U$ has codimension $\ge 2$ in $V$. Then the
restriction $\Br(V) \to \Br(U)$ is an isomorphism.
\end{theorem}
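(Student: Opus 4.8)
The plan is to establish separately that the restriction $j^{*}\colon\Br(V)\to\Br(U)$ is injective and surjective, where $j\colon U\hookrightarrow V$ is the open immersion. Injectivity is formal: $V$ and $U$ are integral with the same function field, and both $\Br(V)$ and $\Br(U)$ embed into $\Br(k(V))$ compatibly with $j^{*}$, by \cite[II,~Cor.~1.10]{Gro68}. For surjectivity, note first that $\Br(V)$ is torsion (being a subgroup of the torsion group $\Br(k(V))$), so it suffices to prove that $j^{*}$ is an isomorphism on $m$-torsion for every $m$, and one may treat the part prime to $p=\operatorname{char}(k)$ separately from the $p$-primary part. The Kummer sequence, respectively the sequence $0\to\Gm\xrightarrow{p^{r}}\Gm\to W_{r}\Omega^{1}_{\log}\to0$ of \'etale sheaves on regular $\mathbb{F}_{p}$-schemes coming from $\mathrm{dlog}$ (the identification of $\Gm/\Gm^{p^{r}}$ with the logarithmic de Rham--Witt sheaf is due to Bloch--Gabber--Kato), together with the classical fact that $j^{*}\colon\Pic(V)\to\Pic(U)$ is an isomorphism when $V\smallsetminus U$ has codimension $\geq2$ on a locally factorial scheme, reduce the problem --- by a diagram chase on the exact sequences $0\to\Pic/n\to\Het^{2}(-,\mu_{n})\to\Br(-)[n]\to0$ and $0\to\Pic/p^{r}\to\Het^{1}(-,W_{r}\Omega^{1}_{\log})\to\Br(-)[p^{r}]\to0$ --- to showing that $j^{*}$ induces isomorphisms $\Het^{2}(V,\mu_{n})\xrightarrow{\sim}\Het^{2}(U,\mu_{n})$ for $n$ prime to $p$, and $\Het^{1}(V,W_{r}\Omega^{1}_{\log})\xrightarrow{\sim}\Het^{1}(U,W_{r}\Omega^{1}_{\log})$ for all $r$.

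The first of these is classical. Writing $Z=V\smallsetminus U$ and stratifying it into regular locally closed subschemes, each of codimension $\geq2$ in $V$, Gabber's absolute purity theorem identifies the contribution of a regular stratum of codimension $c$ to $\Het^{i}(V,\mu_{n})$ with the cohomology of its residue field placed in degrees $i\geq2c\geq4$; assembling the localization sequences of the stratification then shows that $j^{*}$ is an isomorphism on $\Het^{i}(-,\mu_{n})$ for $i\leq3$. In particular, this already completes the proof when $\operatorname{char}(k)=0$.

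The $p$-primary part in characteristic $p$ is the new input and the step I expect to be the main obstacle. Here I would first reduce to the case that $k$ is perfect --- passing to the perfect closure of $k$ is a purely inseparable base change, hence a universal homeomorphism, and so leaves the \'etale cohomology of $V$ and of $U$ and the codimension of $Z$ undisturbed --- and then invoke the Gersten resolution for logarithmic de Rham--Witt sheaves on smooth varieties over a perfect field, i.e.\ the theorem of Gros and Suwa that the Cousin complex of $W_{r}\Omega^{q}_{X,\log}$ is a flasque resolution. The decisive point is that for $q=1$ this resolution involves only points of codimension $0$ and $1$, so it has length $1$; therefore its complex of global sections over $V$ coincides with the one over $U$ (removing the codimension $\geq2$ locus $Z$ deletes no point of codimension $\leq1$), whence $j^{*}$ is an isomorphism on $\Het^{i}(-,W_{r}\Omega^{1}_{\log})$ for every $i$, in particular for $i=1$. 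Combining this with the prime-to-$p$ case and the reductions of the first paragraph finishes the proof. The two places where I would expect to spend real effort are making the reduction to a perfect ground field rigorous (or, alternatively, locating a version of the de Rham--Witt Gersten resolution valid over imperfect fields) and the sheaf-theoretic comparison $\Gm/\Gm^{p^{r}}\cong W_{r}\Omega^{1}_{\log}$; everything else is bookkeeping with long exact sequences.
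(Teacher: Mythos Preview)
Your outline is correct and takes a genuinely different route from the paper's. The paper does not give an argument at all: it reduces to the punctured spectrum of a strictly Henselian regular local ring and then simply cites the purity theorems for the Brauer group itself --- absolute cohomological purity (Gabber, via \cite{Fuji02}, \cite{ILO14}) for the prime-to-$p$ part, Gabber's \cite[Thm.~2.5]{Ga93} for the $p$-primary part, and \cite{Ces17} for the general statement. You instead open up $\Br$ via the Kummer and $\mathrm{dlog}$ sequences, reduce to purity for the coefficient sheaves $\mu_n$ and $W_r\Omega^1_{\log}$, and then invoke absolute purity for the former and Gros--Suwa for the latter. Your approach is more transparent about exactly where the characteristic-$p$ input enters (the length-$1$ Gersten resolution for $W_r\Omega^1_{\log}$ is a very clean reason that codimension $\geq 2$ closed sets are invisible); the paper's approach is a black box but has the advantage of not needing the bookkeeping with Picard groups and short exact sequences.

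One genuine caution you do not list among your ``places to spend real effort'': the Gros--Suwa Cousin complex is a flasque resolution in the \emph{Zariski} topology, so a priori the argument you sketch computes $H^i_{\mathrm{Zar}}(-,W_r\Omega^1_{\log})$, whereas your $\mathrm{dlog}$ exact sequence lives in the \'etale topology and you need $H^1_{\et}$. These do \emph{not} coincide on the nose in general. The fix is to run the coniveau spectral sequence in \'etale cohomology: the local purity underlying Gros--Suwa shows $H^i_{x,\et}(V,W_r\Omega^1_{\log})=0$ for $i\neq\mathrm{codim}(x)$ and identifies the nonvanishing term with $W_r\Omega^{1-\mathrm{codim}(x)}_{k(x),\log}$, so the $E_1$-page collapses to the same length-$1$ complex and your conclusion goes through for $H^*_{\et}$. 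Alternatively, and more cheaply, reduce to $k$ perfect \emph{before} introducing $W_r\Omega^1_{\log}$ at all: universal-homeomorphism invariance already gives $\Br(V)\cong\Br(V_{k^{\mathrm{perf}}})$ directly, so you may as well assume $k$ perfect from the start and avoid worrying about the $\mathrm{dlog}$ sequence over imperfect bases.
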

\begin{proof}
The proof starts with a reduction to the case of the punctured spectrum
of a strictly Henselian regular local ring of dimension $\geq 2$.  For
torsion prime to the characteristic of $k$, this follows from the
absolute cohomological purity conjecture, whose proof, due to Gabber,
appears in \cite{Fuji02} or \cite{ILO14}.  For the $p$ primary torsion
when $k$ has characteristic $p$, this follows from
\cite[Thm.~2.5]{Ga93}.  See also \cite[Thm.~5]{Ga04} and its proof.
Recently, purity for the cohomological Brauer group has been
established in complete generality over any scheme \cite{Ces17}.
\end{proof}

The second is a standard Meyer--Vietoris exact sequence for \'etale
cohomology.

\begin{theorem}\label{tMayerVietorisEtale}
Let $V$ be a scheme. Suppose that $V=U_1\cup U_2$ is the union of two
open subsets. For any sheaf $\FF$ of abelian groups in the \'{e}tale
topology on $V$ there is a long exact cohomology sequence
\begin{align*}
0 & \to \Het^0 (V, \FF ) \to \Het^0 (U_1, \FF ) \oplus \Het^0 (U_2, \FF ) \to \Het^0 (U_1\cap U_2, \FF ) \\
   & \to \Het^1 (V, \FF ) \to \Het^1 (U_1, \FF ) \oplus \Het^1 (U_2, \FF ) \to \Het^1 (U_1\cap U_2, \FF ) \to \dots 
\end{align*}
which is functorial in $\FF$.
\end{theorem}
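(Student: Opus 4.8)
The plan is to deduce the sequence from the long exact cohomology sequence of a short exact sequence of complexes of abelian groups, manufactured from a single injective resolution of $\FF$. Fix an injective resolution $\FF \to \mathcal{I}^\bullet$ in the category of abelian sheaves on the small \'etale site of $V$, and write $j_1 : U_1 \hookrightarrow V$, $j_2 : U_2 \hookrightarrow V$, $j_{12} : U_1 \cap U_2 \hookrightarrow V$ for the open immersions. The first point is that restriction along an open immersion $j$ preserves injectives, since $j^*$ is right adjoint to the exact extension-by-zero functor $j_!$; consequently $\mathcal{I}^\bullet|_U$ is an injective resolution of $\FF|_U$ for each $U \in \{V, U_1, U_2, U_1 \cap U_2\}$, and hence
\[
\Het^n(U, \FF) \isom H^n\big(\Gamma(U, \mathcal{I}^\bullet)\big) \qquad \text{for all } n \ge 0.
\]

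Next, for a single injective abelian sheaf $\mathcal{I}$ on $V$ I would establish the short exact sequence of abelian groups
\[
0 \to \Gamma(V, \mathcal{I}) \to \Gamma(U_1, \mathcal{I}) \oplus \Gamma(U_2, \mathcal{I}) \to \Gamma(U_1 \cap U_2, \mathcal{I}) \to 0,
\]
with first map $s \mapsto (s|_{U_1}, s|_{U_2})$ and second map $(s_1, s_2) \mapsto s_1|_{U_1\cap U_2} - s_2|_{U_1\cap U_2}$. Exactness on the left and in the middle is exactly the gluing axiom for the cover $V = U_1 \cup U_2$, and holds for every sheaf. Surjectivity on the right is the only place where injectivity is used: applying $\Hom_{U_1}(-, \mathcal{I}|_{U_1})$ to the monomorphism of constant sheaves $i_! \Z_{U_1 \cap U_2} \hookrightarrow \Z_{U_1}$ attached to the open immersion $i : U_1 \cap U_2 \hookrightarrow U_1$, and using the adjunction identities $\Hom_{U_1}(\Z_{U_1}, \mathcal{I}|_{U_1}) = \Gamma(U_1, \mathcal{I})$ and $\Hom_{U_1}(i_!\Z_{U_1 \cap U_2}, \mathcal{I}|_{U_1}) = \Gamma(U_1 \cap U_2, \mathcal{I})$, injectivity of $\mathcal{I}|_{U_1}$ forces the restriction $\Gamma(U_1, \mathcal{I}) \to \Gamma(U_1 \cap U_2, \mathcal{I})$ to be surjective; a fortiori the difference map above is surjective.

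Applying this term by term to $\mathcal{I}^\bullet$ and noting that all maps commute with the differentials yields a short exact sequence of complexes of abelian groups, whose associated long exact cohomology sequence is --- after the identifications of the first paragraph --- precisely the asserted Mayer--Vietoris sequence. Functoriality in $\FF$ follows by lifting a morphism $\FF \to \mathcal{G}$ to a morphism of injective resolutions (unique up to homotopy) and invoking the naturality of the long exact sequence of a short exact sequence of complexes; using a functorial injective resolution makes this automatic. The main obstacle is the surjectivity step of the second paragraph: every other ingredient is either the sheaf axiom or formal homological algebra, and it is only here that injectivity does genuine work --- for a general sheaf $\FF$ the cokernel of the restriction-difference map is the first \v{C}ech cohomology $\check{H}^1(\{U_1,U_2\}, \FF)$, which need not vanish. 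I would also flag the alternative route via the \v{C}ech-to-derived-functor spectral sequence $\check{H}^p(\{U_1, U_2\}, \mathcal{H}^q(\FF)) \Rightarrow \Het^{p+q}(V, \FF)$: for a two-element cover the alternating \v{C}ech complex is concentrated in degrees $0$ and $1$, so this spectral sequence has only two nonzero columns and collapses to exactly the same long exact sequence.
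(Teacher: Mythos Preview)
Your argument is correct. The short exact sequence of section complexes of an injective resolution, with surjectivity on the right deduced from the injectivity of $\mathcal{I}|_{U_1}$ via the monomorphism $i_!\Z_{U_1\cap U_2}\hookrightarrow\Z_{U_1}$, is exactly the standard mechanism for producing Mayer--Vietoris, and your functoriality remark is adequate.

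The paper, however, does not give a proof at all: it simply cites \cite[Ex.~2.24, p.~110]{Mi80}. So there is no meaningful comparison of approaches to make beyond noting that what you have written is essentially the intended solution to that exercise (and is also the content of the alternative \v{C}ech-to-derived-functor route you sketch, since for a two-set cover the spectral sequence degenerates to the same long exact sequence). Your write-up is more than sufficient here.
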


\begin{proof}
See \cite[Ex. 2.24, p. 110]{Mi80}.
\end{proof}

\begin{proof}[Proof (of Theorem \ref{tComparisonBrauer})]
Note that to ensure that one has the inclusion $\Br(X) \subset
\Br_{\valuations{DISC}}(k(X))$ one uses the valuative criterion for
properness so that every valuation on $k(X)$ is centered at a point of
$X$. The inclusion $\Br(X) \subset \Br_{\valuations{DIV/}X}(k(X))$
holds regardless of properness assumptions, and the inclusions
$\Br_{\valuations{DISC}}(k(X)) \subset \Br_{\valuations{DIV}}(k(X))
\subset \Br_{\valuations{DIV/}X}(k(X))$ come immediately from the
definitions.

We prove that every class $\alpha$ in $\Br_{\valuations{DIV/}X}(k(X))$
belongs to $\Br(X)$. Any class $\alpha$ in $\Br(k(X))$ can be
represented by a class, denoted $\alpha_V$, in $\Br(V)$ where
$V\subset X$ is open with complement a union of prime divisors $D_i$
on $X$. Moreover, as $\alpha$ is in the image of $\Br(\mathcal{O}_{X,
\xi_i})$ for $\xi_i$ the generic point of $D_i$ and all $i$, we have
that there are open subsets $U_i$ of $X$ such that $U_i\cap D_i\neq
\emptyset$ and there exist classes $\alpha_{U_i}\in \Br(U_i)$ whose
images in $\Br(k(X))$ agree with $\alpha$. By
Theorem~\ref{tMayerVietorisEtale}, using the cohomological description
of the Brauer group, we get that there exist an open subset $U\subset
X$ with complement $X \backslash U$ of codimension at least $2$ and a
class $\alpha_U\in \Br(U)$ inducing $\alpha$ in $\Br(k(X))$. Note that
this uses Grothendieck's injectivity result that Brauer classes on a
regular integral scheme agreeing at the generic point agree everywhere
to achieve the gluing: to use Theorem \ref{tMayerVietorisEtale}
repeatedly, we have to ensure that at each step the Brauer classes
given on the two open sets $\Omega_1=V\cup U_1\cup\dots \cup U_{i-1}$
and $\Omega_2=U_i$ agree on all of $\Omega_1\cap \Omega_2$ and this
follows from Grothendieck's result since we know they agree in
$\Br(k(X))$.

Finally, once we have extended $\alpha$ to the open subset $U$ with
$\mathrm{codim}(X\backslash U, X) \ge 2$, Theorem~\ref{tGabber} shows
that $\alpha$ comes from $\Br(X)$.
\end{proof}

\begin{remark}\label{rUnramifiedBrauer}
In the setting of Theorem~\ref{tComparisonBrauer}, we will agree to
denote the group $\Br_{\valuations{DIV}}(k(X))$ by $\Br_\ur(k(X))$ and
call this the unramified Brauer group of the function field $k(X)$. We
will also use this notation for potentially singular $X$ in positive
characteristic. According to \cite{Hi17}, a resolution of singularities
should always exist, but we do not need this result: in all our
applications we produce explicit resolutions $\widetilde{X}$, and then
we know $\Br_\ur(k(X))=\Br(\widetilde{X})$.
\end{remark}

\section{A variant of Weil reciprocity and a pairing} \label{sWeilAndPairing}

Let $V$ be a proper variety over a field $k$ (not necessarily
algebraically closed in the sequel).  Let $Z_0(V)$ be the group of
$0$-cycles on $V$ and $\CH_0(V)$ the Chow group of $0$-cycles on $V$
up to rational equivalence.  We first define a pairing
$$
Z_0(V) \times \Br(V) \to \Br(k)
$$
as follows: for a $0$-cycle $z = \sum_i a_i z_i$ and a Brauer class
$\alpha \in \Br(V)$, define
$$
\langle z, \alpha \rangle = \alpha(z) = \sum_i a_i \cores_{k(z_i)/k}(\alpha(z_i))
$$
where $a_i$ are integers, $z_i$ are closed points of $V$ with residue
fields $k(z_i)$, and $\cores$ is the corestriction map. Thus $\langle z, \alpha \rangle$ and $\alpha(z)$ are simply alternative notation for this pairing which are convenient in different settings. 
This pairing is clearly bilinear.  The main result of this section is
the following.

\begin{proposition}\label{pFactorise}
Let $X$ be a smooth proper variety over a field $k$.  The pairing on
$0$-cycles defined above factors through to a pairing
$$
\CH_0(X) \times \Br(X) \to \Br(k).
$$
\end{proposition}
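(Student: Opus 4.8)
The plan is to reduce, via the definition of rational equivalence, to a reciprocity statement for Brauer classes on curves, and then to reduce the curve case to $\P^1$ using a finite morphism and a projection formula for the pairing. Recall that $\CH_0(X)=Z_0(X)/\mathrm{Rat}_0(X)$, where $\mathrm{Rat}_0(X)$ is generated by the $0$-cycles $\nu_*\bigl(\divisor_{\wt C}(f)\bigr)$, with $C$ ranging over the integral closed curves in $X$, $\nu\colon\wt C\to C\hookrightarrow X$ the normalization (a finite morphism, since varieties over a field are excellent), $\wt C$ a regular integral proper curve over $k$, and $f\in k(\wt C)\mult$. It therefore suffices to show $\langle\nu_*(\divisor_{\wt C}(f)),\alpha\rangle=0$ in $\Br(k)$ for every $\alpha\in\Br(X)$. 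The first step is a projection formula for the pairing along an arbitrary finite morphism $\nu\colon Y'\to Y$ of $k$-varieties: for a $0$-cycle $z$ on $Y'$ and $\alpha\in\Br(Y)$ one has $\langle\nu_*z,\alpha\rangle=\langle z,\nu^*\alpha\rangle$. Taking $z=[P]$ with $y=\nu(P)$, so that $\nu_*[P]=[k(P):k(y)]\,[y]$ and $(\nu^*\alpha)(P)=\res_{k(P)/k(y)}\alpha(y)$, this unwinds to $\cores_{k(P)/k}\bigl(\res_{k(P)/k(y)}\alpha(y)\bigr)=[k(P):k(y)]\cdot\cores_{k(y)/k}\alpha(y)$, which follows from the relation $f_*f^*\alpha=d\alpha$ of Proposition~\ref{pAllAuxiliary}, applied to $\Spec k(P)\to\Spec k(y)$, together with the transitivity of corestriction from the same proposition. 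Applying this to $\nu$ and writing $\beta=\nu^*\alpha$, we are reduced to: for a regular integral proper curve $C$ over $k$, a class $\beta\in\Br(C)$, and $f\in k(C)\mult$, one has $\langle\divisor_C(f),\beta\rangle=0$.

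We may assume $\divisor_C(f)\ne 0$; then $f$ is transcendental over $k$ and defines a finite flat morphism $f\colon C\to\P^1_k$ (a nonconstant rational function on a proper regular curve is finite onto $\P^1$, and flat since both curves are regular), with $\divisor_C(f)=f^*([0]-[\infty])$ for the flat pullback of cycles. Let $f_*\beta\in\Br(\P^1_k)$ be the corestriction of $\beta$ (Proposition~\ref{pAllAuxiliary}). The core of the argument is a second projection formula,
\[
\langle f^*D,\beta\rangle=\langle D,f_*\beta\rangle\qquad\text{for all }D\in Z_0(\P^1_k).
\]
Granting it, and using that $\P^1_k\to\Spec k$ induces an isomorphism $\Br(k)\xrightarrow{\sim}\Br(\P^1_k)$ (well known; e.g.\ via the Leray spectral sequence), so that a Brauer class on $\P^1_k$ takes the same value at the two $k$-rational points $0$ and $\infty$, we get
\[
\langle\divisor_C(f),\beta\rangle=\langle f^*[0],\beta\rangle-\langle f^*[\infty],\beta\rangle=(f_*\beta)(0)-(f_*\beta)(\infty)=0,
\]
which finishes the proof. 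To prove the projection formula it suffices to take $D=[Q]$ with $Q\in\P^1_k$ a closed point. By the compatibility of corestriction with base change (Proposition~\ref{pAllAuxiliary}), applied to the cartesian square over $\Spec k(Q)\hookrightarrow\P^1_k$, the value $(f_*\beta)(Q)$ equals the corestriction of $\beta|_{f^{-1}(Q)}$ along the finite flat morphism $f^{-1}(Q)\to\Spec k(Q)$. Writing $f^{-1}(Q)=\coprod_{P\mapsto Q}\Spec B_P$ with $B_P$ an Artinian local $k(Q)$-algebra of residue field $k(P)$ and $e_P=\mathrm{length}_{B_P}B_P$ the ramification index (so $f^*[Q]=\sum_P e_P\,[P]$), and then applying $\cores_{k(Q)/k}$ and transitivity of corestriction, the projection formula is reduced to the local statement below.

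\emph{Key local identity.} Let $B$ be an Artinian local ring, finite over a field $K$, with residue field $L$ and $e=\mathrm{length}_B B$. Restriction to the closed point is an isomorphism $\Br(B)\xrightarrow{\sim}\Br(L)$ (regard $\Gm$ on the common \'etale site of $\Spec B$ and $\Spec L$, filter it by the subsheaves $1+\mathfrak n^j$ with $\mathfrak n$ the nilradical, and use that the successive quotients are $L$-vector groups with vanishing positive \'etale cohomology), and the claim is that under this isomorphism $\cores_{B/K}=e\cdot\cores_{L/K}$. If $L/K$ is separable this is routine: by Hensel's lemma $B$ contains a coefficient field, giving $\Spec B\to\Spec L\to\Spec K$, and Proposition~\ref{pAllAuxiliary} then yields $\cores_{B/L}=e\cdot\id$ on $\Br(L)$, whence the claim; this already covers all of characteristic $0$ and the tamely ramified situations in characteristic $p$. \textbf{The main obstacle} is the case where $L/K$ is inseparable, which can occur only when $\mathrm{char}\,k=p>0$ and for which no coefficient field exists. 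I expect this is handled by reducing via d\'evissage to $L=K(b^{1/p})$ with an explicitly presented $B$ and computing directly the effect of the norm map $B\mult\to K\mult$ on the induced filtration of $\Gm$; this inseparable computation is the genuinely new input, and is the reason the cycle-module approach of \cite{Mer08} does not cover $p$-primary Brauer classes.
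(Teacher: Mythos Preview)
Your overall architecture is exactly that of the paper: reduce to curves via the projection formula $\langle\nu_*z,\alpha\rangle=\langle z,\nu^*\alpha\rangle$ (this is Lemma~\ref{lAdjunction1}), then reduce from a normal proper curve $C$ to $\P^1_k$ via the finite morphism determined by $f$ together with the dual projection formula $\langle f^*D,\beta\rangle=\langle D,f_*\beta\rangle$ (this is Lemma~\ref{lAdjointness}), and finish on $\P^1$ using $\Br(\P^1_k)=\Br(k)$. Your reduction of the second projection formula to the ``key local identity'' $\cores_{B/K}=e\cdot\cores_{L/K}$, with $B$ Artinian local over $K$ of residue field $L$ and length $e$, is likewise the content of Steps~1--2 in the paper's proof of Lemma~\ref{lAdjointness}.

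The gap you flag---the inseparable residue extension---is real in your write-up, but the resolution is not the d\'evissage to $K(b^{1/p})$ you anticipate. The paper does \emph{not} analyse the extension $L/K$ at all and never invokes a coefficient field, so the separable/inseparable dichotomy you set up is unnecessary. The point is that here $B$ is not an arbitrary Artinian local $K$-algebra: it arises as a fiber of a finite morphism from a regular curve, hence $B=\OO/(\pi^n)$ for a discrete valuation ring $\OO$ with uniformizer $\pi$ and residue field $L$. Lift a $K$-basis $\bar b_1,\dots,\bar b_d$ of $L$ to elements $b_1,\dots,b_d\in\OO$; then $\{\pi^i b_j : 0\le i\le n-1,\ 1\le j\le d\}$ is a $K$-basis of $B$, and in this ordered basis multiplication by any $a\in B$ is block lower triangular with all $n$ diagonal blocks equal to the matrix of multiplication by $\bar a$ on $L$. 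Hence $\mathrm{Nm}_{B/K}(a)=\bigl(\mathrm{Nm}_{L/K}(\bar a)\bigr)^n$ on the nose, uniformly in the nature of $L/K$. Passing to sheaves of units and then to $\Het^2$ yields $\cores_{B/K}=n\cdot\cores_{L/K}\circ\,\mathrm{red}^*$ with no separability hypothesis. So the ``main obstacle'' dissolves once you exploit the DVR structure of the fiber; no special treatment of $p$th roots is needed.
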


We need to show that rationally equivalent $0$-cycles on $X$ have the
same values when paired with Brauer classes. The $0$-cycles on $X$
rationally equivalent to zero are sums of cycle of the form
$\pi_*(z)$, where $D \subset X$ is a curve with normalization
\[
\pi  : C \to D
\]
and $z= \mathrm{div}(f)$  for a rational function $f \in k(C)^*$.

\begin{lemma}\label{lAdjunction1}
If $\pi  : C \to D$ is any finite morphism of proper curves, $\alpha \in \Br(D)$, $z \in Z_0 (C)$, then 
\[
\langle z, \pi^* (\alpha ) \rangle = \langle \pi_* (z), \alpha \rangle .
\]
\end{lemma}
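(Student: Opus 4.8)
The plan is to reduce the statement about an arbitrary $0$-cycle $z$ on $C$ to the case of a single closed point, using bilinearity of the pairing, and then to trace through the definitions of pull-back, push-forward (corestriction) on Brauer groups, and the norm/corestriction on residue fields. First I would write $z = \sum_j a_j z_j$ with $z_j$ closed points of $C$; by bilinearity of $\langle -, - \rangle$ it suffices to prove the identity when $z$ is a single closed point $w \in C$. Let $\varphi = \pi$, let $w \in C$ be a closed point with image $x = \pi(w) \in D$, and observe that $\pi_*(w) = [k(w):k(x)] \cdot x$ as a $0$-cycle on $D$ (the length of the fibre, or equivalently the degree of the residue field extension when $C \to D$ is finite). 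So the right-hand side is $[k(w):k(x)] \cdot \cores_{k(x)/k}(\alpha(x))$.

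Next I would compute the left-hand side. By definition $\langle w, \pi^*(\alpha) \rangle = \cores_{k(w)/k}\big( (\pi^*\alpha)(w) \big)$, and $(\pi^*\alpha)(w) = i_w^* \pi^* \alpha$ where $i_w : w \hookrightarrow C$. Since $\pi \circ i_w$ factors through $i_x : x \hookrightarrow D$ followed by the residue field extension $\Spec k(w) \to \Spec k(x)$, functoriality of pull-back (Proposition~\ref{pAllAuxiliary}\ref{iPullBack}) gives $i_w^* \pi^* \alpha = \res_{k(w)/k(x)}\big( \alpha(x) \big) \in \Br(k(w))$. Therefore the left-hand side equals $\cores_{k(w)/k}\big( \res_{k(w)/k(x)}(\alpha(x)) \big)$, and by transitivity of corestriction (Proposition~\ref{pAllAuxiliary}\ref{iPushForward}\,\textit{i}) applied to $\Spec k(w) \to \Spec k(x) \to \Spec k$, this is $\cores_{k(x)/k}\big( \cores_{k(w)/k(x)} \res_{k(w)/k(x)}(\alpha(x)) \big)$. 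Finally, the projection-formula identity $\cores \circ \res = [k(w):k(x)] \cdot \mathrm{id}$ (Proposition~\ref{pAllAuxiliary}\ref{iPushForward}\,\textit{ii}, applied to the finite flat morphism $\Spec k(w) \to \Spec k(x)$ of degree $[k(w):k(x)]$) turns this into $[k(w):k(x)] \cdot \cores_{k(x)/k}(\alpha(x))$, matching the right-hand side.

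One technical point I would be careful about: the morphism $\pi$ is only assumed \emph{finite}, not flat, so a closed point $w \in C$ may map to a point $x \in D$ at which $\pi$ is not flat, and one must confirm that $\pi_*(w)$ as used in the Chow-group-theoretic pairing is indeed $[k(w):k(x)]\cdot x$ — this is just the standard definition of proper push-forward on $0$-cycles (the coefficient is the degree of the finite field extension $k(w)/k(x)$, which is finite since $\pi$ is finite), so there is no genuine difficulty, only a matter of matching conventions. The only real subtlety is that the corestriction on Brauer groups of fields used here is the field-theoretic corestriction built from $\Spec k(w) \to \Spec k(x)$ being finite flat (it is, being a finite extension of fields), so the invocation of Proposition~\ref{pAllAuxiliary}\ref{iPushForward} is legitimate.

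The main obstacle, such as it is, is purely bookkeeping: one must be sure that the three operations — the pull-back $\pi^*$ on $\Br(D)$, the evaluation $\alpha \mapsto \alpha(x)$ (also a pull-back), and the residue-field corestriction — all commute in the expected way, i.e.\ that the square relating $\pi^* : \Br(D) \to \Br(C)$ and the inclusions of the points $w$ and $x$ is the obvious one. This follows formally from functoriality of \'etale cohomology pull-back (Proposition~\ref{pAllAuxiliary}\ref{iPullBack}), so I expect no real difficulty; the proof is essentially a diagram chase combining parts \ref{iPullBack}, \ref{iPushForward}\,\textit{i}, and \ref{iPushForward}\,\textit{ii} of Proposition~\ref{pAllAuxiliary}.
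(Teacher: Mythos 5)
Your proposal is correct and follows essentially the same route as the paper: reduce to a single closed point by bilinearity, chase the commutative square relating the point inclusions on $C$ and $D$, and then combine transitivity of corestriction with the identity $f_*f^*=d$ from Proposition~\ref{pAllAuxiliary}\ref{iPushForward}. Your side remarks on $\pi_*(w)=[k(w):k(x)]\cdot x$ and on the flatness of finite field extensions are accurate and match the implicit conventions in the paper's argument.
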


\begin{proof}
Since the pairing is bilinear and $\pi_*$ linear, it suffices to prove this for the case where $z \in C$ is a single closed point with residue field $k(z)$. The image $w=\pi (z)$ (of the map $\pi$ on the underlying point sets of $C$ and $D$) is a closed point of $D$ and we have a commutative diagram
\[
\xymatrix@C=36pt{
 & \Spec\, k(z) \ar[ld]_{\epsilon_z} \ar[d]^{\pi_z} \ar@{^{(}->}[r]^(.63){\iota_z} & C \ar[d]^{\pi} \\
\Spec\; k & \Spec\, k(w)\ar[l]^{\epsilon_w} \ar@{^{(}->}[r]^(.63){\iota_w} & D
}
\]
The left hand side $\langle z, \pi^* (\alpha ) \rangle$ is nothing but
$\epsilon_{z,*} (\iota_z^* \pi^* \alpha )$, which we can rewrite as
\[
\epsilon_{z, *} \iota_z^* \pi^* \alpha  = \epsilon_{w, *}\pi_{z, *}\pi_z^*\iota_w^* \alpha = [k(z) : k(w)] \epsilon_{w, *} \iota_w^* \alpha = \langle \pi_* (z), \alpha \rangle
\]
noting that $\epsilon_{z,*} = \epsilon_{w,*}\pi_{z,*}$ and that $\pi_{z,
*}\pi_z^*$ is multiplication by $[k(z):k(w)]$ by
Proposition~\ref{pAllAuxiliary}\ref{iPushForward}, and since $\pi_*
(z) = [k(z) : k(w)] w$.
\end{proof}

We point out that the previous Lemma~\ref{lAdjunction1} holds more
generally for any finite morphism of smooth proper varieties. Now, the
proof of Proposition \ref{pFactorise} will be complete once we
establish the following.

\begin{proposition}\label{pWeilRecAnalogue}
Let $C$ be a normal proper curve over $k$.  Then for any rational
function $f \in k(C)^*$ and any Brauer class $\alpha \in \Br(C)$, we
have that $\alpha(\divisor(f))=0$.
\end{proposition}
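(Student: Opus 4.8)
The plan is to reduce the statement to the classical Weil reciprocity law for Brauer groups of curves. First I would observe that the rational function $f \in k(C)^*$ defines a finite morphism $f : C \to \P^1_k$ (the constant case $f \in k^*$ being trivial, since then $\divisor(f) = 0$), and that $\divisor(f) = f^*([0] - [\infty])$ as $0$-cycles on $C$, at least up to the multiplicities coming from ramification — more precisely $f_*(\divisor(f)) = \deg(f)\cdot([0]-[\infty])$ by the standard description of the norm of a rational function, or rather one uses directly that $\divisor(f)$ is the pullback cycle $f^*([0]) - f^*([\infty])$. Then by Lemma~\ref{lAdjunction1} applied to the finite morphism $\pi = f : C \to \P^1_k$, for any $\alpha \in \Br(C)$ we would like to express $\langle \divisor(f), \alpha\rangle$ in terms of a pairing on $\P^1_k$; but $\alpha$ need not be a pullback from $\P^1_k$, so a direct application is not available. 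Instead I would argue with the projection formula in the other direction: write $\langle \divisor(f),\alpha\rangle$ and use that $\divisor(f)$ is the divisor of a function to bring in the residue/ramification structure at the points of $C$ lying over $0$ and $\infty$.

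The cleaner route, which I expect to be the intended one, is this. Consider the finite flat morphism $f : C \to \P^1_k$. By Proposition~\ref{pAllAuxiliary}\ref{iPushForward} we have a corestriction $f_* : \Br(C) \to \Br(\P^1_k)$, and the key identity to establish is
\[
\langle \divisor(f), \alpha \rangle = \langle [0] - [\infty],\, f_*\alpha \rangle_{\P^1_k},
\]
which is again an instance of adjunction between $f^*$ and $f_*$ — this time the version where the cycle is pushed forward and the class is pushed forward, matching the divisor $\divisor(f) = f^*[0] - f^*[\infty]$ against $\alpha$. This reduces everything to the single case $C = \P^1_k$, $f = $ the coordinate, i.e. to showing $\langle [0] - [\infty], \beta\rangle = 0$ for every $\beta \in \Br(\P^1_k)$. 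But $\Br(\P^1_k) = \Br(k)$ via pullback along the structure map (this is the computation of the Brauer group of the projective line, e.g. via Tsen's theorem over $\ol k$ together with the Hochschild–Serre spectral sequence, or simply because $\P^1_k \to \Spec k$ has sections so $\Br(k) \to \Br(\P^1_k)$ is split injective, and surjectivity is the standard fact), so $\beta = p^*\gamma$ for $\gamma \in \Br(k)$; then $\beta(0) = \gamma = \beta(\infty)$, and $\langle [0]-[\infty],\beta\rangle = \cores_{k/k}\gamma - \cores_{k/k}\gamma = 0$.

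The main obstacle is the adjunction identity $\langle \divisor(f),\alpha\rangle = \langle [0]-[\infty], f_*\alpha\rangle$: one has to be careful that the divisor of $f$ on $C$ is exactly the difference of the scheme-theoretic fibers $f^{-1}(0)$ and $f^{-1}(\infty)$ as $0$-cycles (including multiplicities/ramification), and that the value $(f_*\alpha)(0) = \cores$ of the values of $\alpha$ over the points of the fiber, matches $\sum_{z \in f^{-1}(0)} (\text{mult}_z)\,\cores_{k(z)/k}\alpha(z)$. This compatibility is the base-change statement in Proposition~\ref{pAllAuxiliary}\ref{iPushForward}(iii) applied to the fiber square over $\Spec k(0) = \Spec k \hookrightarrow \P^1_k$, combined with part (i) for the composition $\Spec k(z) \to \Spec k(0) \to \Spec k$. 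A minor point to handle first: if $f$ is constant there is nothing to prove, and if $C$ has several components one works componentwise, so we may assume $f$ is a genuinely finite surjective morphism to $\P^1_k$. I would also remark, as the paper does for Lemma~\ref{lAdjunction1}, that the argument only uses properties already assembled in Section~\ref{sBrauerGroups}, so no characteristic hypothesis intervenes here.
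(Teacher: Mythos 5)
Your overall strategy is exactly the paper's: prove the statement first for $\P^1_k$ using $\Br(\P^1_k)=\Br(k)$ and the fact that principal divisors have degree zero, then reduce the general case to $\P^1_k$ via the finite flat morphism $\varphi_f : C \to \P^1_k$ defined by $f$, together with the adjunction $\langle \varphi_f^*(z),\alpha\rangle = \langle z, (\varphi_f)_*\alpha\rangle$ applied to $z=[0]-[\infty]=\divisor(\mathrm{id}_{\P^1_k})$. This is precisely Steps 1 and 2 of the paper's proof, with the adjunction isolated as Lemma~\ref{lAdjointness}.

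The one point where your justification falls short is the adjunction identity itself, which you correctly single out as the main obstacle but then attribute to Proposition~\ref{pAllAuxiliary}~\ref{iPushForward}~\textit{iii)} combined with \textit{i)}. The base-change statement \textit{iii)} does reduce you to the scheme-theoretic fibre $\varphi_f^{-1}(z)$, but that fibre is in general non-reduced: it is a disjoint union of spectra of Artinian rings $\OO_{C,x}/(\pi_x^{n_x})$. What you then need is that the corestriction $\Br(\OO_{C,x}/(\pi_x^{n_x})) \to \Br(k(z))$ equals $n_x$ times the composite of restriction to the reduced point $\Spec k(x)$ with $\cores_{k(x)/k(z)}$ --- this is exactly what converts the scheme-theoretic fibre into the cycle-theoretic pullback with multiplicities. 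Part \textit{i)} cannot supply this, since the reduction $\Spec k(x) \hookrightarrow \Spec \OO_{C,x}/(\pi_x^{n_x})$ is a closed immersion, not a finite flat covering factoring the map to $\Spec k(z)$. The paper proves this identity directly (Steps 2 and 3 of the proof of Lemma~\ref{lAdjointness}) by writing the multiplication-by-$a$ endomorphism of the Artinian algebra as a block lower-triangular matrix, deducing $\mathrm{Nm}_{\sA/L}(a)=\bigl(\mathrm{Nm}_{\bar{\sA}/L}(\bar a)\bigr)^{n}$, and then taking \'etale cohomology of the resulting factorisation of norm maps of sheaves. So: right plan, right reduction, but the non-reduced-fibre computation is a genuine missing ingredient rather than a citation away.
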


Note that if $\alpha$ has order $l$ coprime to the characteristic of $k$, then one can write  
$$ 
\alpha(\divisor(f)) = \sum_{P \in C^{(1)}} \cores_{k(P)/k}(\ord_P(f) \alpha(P))
= \sum_{P \in C^{(1)}} \cores_{k(P)/k} \partial_P \bigl( (f) \cup \alpha \bigr)
$$
where the sum is over all codimension 1 points of $C$, where $\ord_P$
is the order of vanishing, and where $\partial_P$ is the residue map
associated to the discrete valuation on $k(C)$ determined by
$P$. Hence, Proposition \ref{pWeilRecAnalogue} follows from the Weil
reciprocity formula in Rost \cite[Prop. 2.2 (RC)]{Ro96}. See also
\cite[Proposition~7.4.4]{GS06} for a general Weil reciprocity law in
Milnor $K$-theory. For torsion a power of $\mathrm{char}(k)$, the
theory of residue maps breaks down and only some remnants remain
\cite[App.~A,~p.~152ff]{GMS03}; the proof in \cite[p.~340--341]{Ro96}
heavily relies on residues as well as co-residues that we do not know
how to define in this situation, so we resort to a more elementary
approach to prove the special case of Weil reciprocity needed in
Proposition~\ref{pWeilRecAnalogue}. In fact, the proof is modeled on
the proof of classical Weil reciprocity on a smooth projective curve
$C$: for nonzero rational functions $f, g$ with disjoint zero and
polar sets $f (\mathrm{div}(g)) = g (\mathrm{div}(f))$, see
\cite[p.~283]{ACGH85}.

\begin{proof}[Proof (of Proposition \ref{pWeilRecAnalogue})]
The pairing above induces a bilinear pairing
\[
k(C)^* \times \Br(C) \to \Br(k), \quad (f, \alpha) \mapsto \langle f,  \alpha \rangle
\]
by writing $\mathrm{div}(f) = \sum_i a_i z_i$ and defining $\langle f, \alpha \rangle = \sum_i a_i \cores_{k(z_i)/k}(\alpha(z_i))$ as above. We want to show that this pairing is trivial on $k(C)^* \times \Br(C)$.

\textbf{Step 1.} We prove Proposition \ref{pWeilRecAnalogue} for $C =
\P^1_k$. In that case, $\alpha$ is induced from $\Br(k)$; more
precisely, if $\omega : \P^1_k \to \Spec(k)$ is the structure
morphism, $\alpha = \omega^* (\alpha')$ from some $\alpha' \in
\Br(k)$: indeed, a theorem due to Grothendieck says that
$\Br(\P^1_k)=\Br(k)$ for any field; alternatively, by \cite[Prop. 3.4
(4)]{Mer15}, we have that the natural morphism $H^2(k, \Q/\Z (1)) \to
\Hur^2 (k(\P^1_k), \Q/\Z (1))$ is an isomorphism, and by \cite[\S
2.1]{Mer15}, $H^2 (F, \Q/\Z (1)) = \Br(F)$ for any field. Moreover,
$\Br(X)$ injects into $\Brur(k(X))$ for any regular integral
$k$-variety \cite[II,~Cor.~1.10]{Gro68}, with the definition of
unramified elements as in \cite[\S 2.4]{Mer15}.

Now if $z_i$ is the image of $\psi_i  : \Spec(k(z_i)) \to X$, then the pull-back of $\alpha'$ via the composition
\[
\Spec(k(z_i)) \stackrel{\psi_i}{\longrightarrow} X \stackrel{\omega}{\longrightarrow} \Spec(k)
\]
followed by the corestriction $\cores_{k(z_i)/k} : \Br(k(z_i)) \to \Br(k)$ is nothing but $[k(z_i):k] \alpha'$. Hence,
\[
\langle f, \alpha \rangle = \sum_i a_i \cores_{k(z_i)/k}(\alpha(z_i)) = \alpha' \sum_i [k(z_i):k] a_i = \deg(\mathrm{div}(f)) \alpha' =0
\]
since the degree of a principal divisor is zero.

\textbf{Step 2.} Reduction of the general case to $\P^1_k$. Given a nonconstant $f\in k(C)^*$ (for constant $f$'s the assertion to be proved is trivial), it defines a surjective finite flat morphism $C \to \P^1_k$, which we will denote by $\varphi_f$. Moreover, letting $g=\mathrm{id}_{\P^1_k}\in k(\P^1_k)$ we have the very stupid but useful equality $\varphi^*_f (g) = f$. 
Now by Lemma \ref{lAdjointness}, proven below, to conclude, we can simply compute
\[
\langle f, \alpha \rangle = \langle \varphi_f^* (g), \alpha \rangle = \langle g, (\varphi_f)_* (\alpha ) \rangle = 0,
\]
the last equality by Step 1.
\end{proof}

\begin{lemma}\label{lAdjointness}
For a finite flat covering of normal proper curves $\varphi : C \to D$
over $k$, a $0$-cycle $z \in Z_0 (D)$, and a Brauer class $\alpha \in
\Br(C)$, we have
\[
\langle \varphi^* (z) , \alpha \rangle = \langle z, \varphi_* (\alpha ) \rangle 
\]
where $\varphi_* (\alpha ) = \mathrm{Nm}_{C/D}(\alpha )$ is the norm
map of Proposition~\ref{pAllAuxiliary} and $\varphi^* (z)$ the
pull-back of divisors from Definition~\ref{dNormRatFunct}.
\end{lemma}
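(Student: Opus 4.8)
The plan is to reduce, using bilinearity of the pairing and linearity of the divisor pull‑back, to the case where $z = w$ is a single closed point of $D$, with residue field $k(w)$, and then to analyse the fibre of $\varphi$ over $w$. Let $\iota_w \colon \Spec k(w) \hookrightarrow D$ be the inclusion and form the fibre square, with $C_w = C \times_D \Spec k(w)$ and projections $f_w \colon C_w \to \Spec k(w)$ (finite flat of degree $n = \deg\varphi$) and $g_w \colon C_w \to C$. Since $\varphi$ is finite, $C_w = \coprod_j \Spec B_j$ is a finite disjoint union of spectra of local Artinian $k(w)$-algebras, where $B_j$ has residue field $k(z_j)$ for $z_j \in C$ the point below it and the length of $B_j$ over itself is $e_j := \ord_{z_j}(\varphi^*t)$ for $t$ a uniformiser at $w$; in particular $\dim_{k(w)} B_j = e_j\,[k(z_j):k(w)]$ and, as a $0$-cycle on $C$, $\varphi^*(w) = \sum_j e_j z_j$. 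By Proposition~\ref{pAllAuxiliary}\ref{iPushForward}, property~\textit{iii)}, applied to this square, $\iota_w^* \varphi_*\alpha = (f_w)_* g_w^*\alpha$, and, decomposing over the disjoint union, $(f_w)_* g_w^*\alpha = \sum_j (f_{w,j})_*(\alpha|_{B_j})$, where $f_{w,j}\colon \Spec B_j \to \Spec k(w)$ is the restriction of $f_w$ and $\alpha|_{B_j}$ the pull‑back of $\alpha$ to $\Spec B_j$. Thus
\[
\langle z, \varphi_*\alpha\rangle = \cores_{k(w)/k}\Bigl(\sum_j (f_{w,j})_*(\alpha|_{B_j})\Bigr),
\]
so by transitivity of corestriction (Proposition~\ref{pAllAuxiliary}\ref{iPushForward}, property~\textit{i)}) the Lemma follows once I establish the local identity
\[
(f_{w,j})_*(\alpha|_{B_j}) = e_j\cdot\cores_{k(z_j)/k(w)}\bigl(\alpha(z_j)\bigr)\qquad\text{in }\Br(k(w)).
\]

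To prove this I would first note that restriction to the closed point is an isomorphism $\Br(B_j) \xrightarrow{\ \sim\ } \Br(k(z_j))$: the kernel of $\Gms{B_j}\to\Gms{k(z_j)}$ on the small \'etale site of $\Spec B_j$ is the subsheaf $1+\mathfrak n$, where $\mathfrak n$ is the (nilpotent) maximal ideal, and $1+\mathfrak n$ is filtered by the sheaves $\mathfrak n^i/\mathfrak n^{i+1}$, which are quasi‑coherent; since \'etale and Zariski cohomology of a quasi‑coherent sheaf agree and $\Spec B_j$ is affine, $\Het^i(\Spec B_j, 1+\mathfrak n)=0$ for $i>0$, and the long exact sequence gives the claim. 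Write $\delta$ for the inverse isomorphism, so $\alpha|_{B_j}=\delta(\alpha(z_j))$. Now put $R=\mathcal O_{C,z_j}$, a discrete valuation ring with uniformiser $\pi$ and residue field $k(z_j)$, so that $B_j = R/\pi^{e_j}R$; for $1\le m\le e_j$ the quotient $R/\pi^m R$ is a finite $k(w)$-algebra with finite flat structure morphism $g_m\colon \Spec(R/\pi^m R)\to\Spec k(w)$, and restriction to the closed point identifies $\Br(R/\pi^m R)$ with $\Br(k(z_j))$, with inverse $\delta_m$. I claim $(g_m)_*\circ\delta_m = m\cdot\cores_{k(z_j)/k(w)}$, by induction on $m$: for $m=1$ this is just the definition of corestriction for $\Spec k(z_j)\to\Spec k(w)$. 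For the inductive step, push the exact sequence of \'etale sheaves on $\Spec(R/\pi^{m+1}R)$ induced by the surjection $R/\pi^{m+1}R\twoheadrightarrow R/\pi^m R$ (the two schemes share the same underlying space),
\[
1 \to 1+\pi^m(R/\pi^{m+1}R)\to \Gms{R/\pi^{m+1}R}\to \Gms{R/\pi^m R}\to 1,
\]
forward along the finite morphism $g_{m+1}$, take $\Het^2(\Spec k(w),-)$, and use the Leray isomorphisms (the kernel sheaf, being the additive group of a coherent module, is cohomologically trivial both on $\Spec(R/\pi^{m+1}R)$ and on $\Spec k(w)$) together with the block‑triangular formula for the norm (determinant of multiplication by a unit $u$) along the filtration $0\subset\pi^m(R/\pi^{m+1}R)\subset R/\pi^{m+1}R$, whose two graded pieces are $R/\pi R = k(z_j)$ and $R/\pi^m R$; writing $\bar u$ for the image of $u$ in the relevant graded piece,
\[
\mathrm{Nm}_{(R/\pi^{m+1}R)/k(w)}(u) = \mathrm{Nm}_{k(z_j)/k(w)}(\bar u)\cdot\mathrm{Nm}_{(R/\pi^m R)/k(w)}(\bar u).
\]
This gives $(g_{m+1})_* = \cores_{k(z_j)/k(w)}\circ r_1 + (g_m)_*\circ r_m$ on $\Br(R/\pi^{m+1}R)$, where $r_1,r_m$ are restriction to $\Br(k(z_j))$, resp.\ $\Br(R/\pi^m R)$; since $r_1\circ\delta_{m+1}=\id$ and $r_m\circ\delta_{m+1}=\delta_m$, composing with $\delta_{m+1}$ and invoking the inductive hypothesis yields $(g_{m+1})_*\circ\delta_{m+1} = (m+1)\cdot\cores_{k(z_j)/k(w)}$. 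Taking $m=e_j$ (so $g_{e_j}=f_{w,j}$ and $\delta_{e_j}=\delta$) proves the local identity.

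Assembling everything, and applying transitivity of corestriction once more,
\[
\langle z, \varphi_*\alpha\rangle = \cores_{k(w)/k}\Bigl(\sum_j e_j\cores_{k(z_j)/k(w)}(\alpha(z_j))\Bigr) = \sum_j e_j\cores_{k(z_j)/k}(\alpha(z_j)) = \langle\varphi^*(w),\alpha\rangle = \langle\varphi^*(z),\alpha\rangle.
\]
The one genuinely delicate point is the local identity when $\mathrm{char}(k)=p>0$: the fibre of $\varphi$ over $w$ may then be non‑reduced with $k(z_j)$ an \emph{inseparable} extension of $k(w)$, so $B_j$ need not contain a coefficient field over $k(w)$, and one cannot simply factor $f_{w,j}$ as $\Spec B_j\to\Spec k(z_j)\to\Spec k(w)$ and conclude from $f_*f^*=\deg$ (Proposition~\ref{pAllAuxiliary}\ref{iPushForward}, property~\textit{ii)}); the dévissage above, which strips off one $1$-dimensional graded piece at a time and on it sees only $\mathrm{Nm}_{k(z_j)/k(w)}$, is what replaces that argument and makes the characteristic irrelevant. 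Everything else is formal from Proposition~\ref{pAllAuxiliary}.
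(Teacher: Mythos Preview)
Your proof is correct and follows essentially the same three-step strategy as the paper: reduce by bilinearity to a single closed point $w\in D$, pass to the fibre $C_w$ via Proposition~\ref{pAllAuxiliary}\ref{iPushForward}\textit{iii)}, decompose $C_w$ into its local Artinian pieces $\Spec(R/\pi^{e_j}R)$, and establish the local identity $(f_{w,j})_*(\alpha|_{B_j}) = e_j\cdot\cores_{k(z_j)/k(w)}(\alpha(z_j))$ by analysing the norm. The only real difference is in how the local norm computation is carried out: the paper writes down an explicit $L$-basis $b_i\pi^j$ of $R/\pi^n R$ and observes directly that the matrix of multiplication by $a$ is block lower-triangular with $n$ identical diagonal blocks $M$, whence $\mathrm{Nm}_{\sA/L}(a)=(\mathrm{Nm}_{\bar\sA/L}(\bar a))^n$ and the sheaf-level norm factors as reduction, $n$th power, then $\bar N$; you instead run an induction on $m$, peeling off one graded piece $\pi^m/\pi^{m+1}\cong k(z_j)$ at a time and using the resulting multiplicative splitting of the norm. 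These are two presentations of the same block-triangular determinant identity, and both work uniformly in the characteristic for exactly the reason you highlight at the end.
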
 

\begin{proof}
We divide the proof into steps for greater transparency. By the bilinearity of the pairing and linearity of $\varphi^*$, we can reduce to the case when $z$ is a closed point in $D$ with residue field $k(z)$, which we will assume in the sequel. 

\textbf{Step 1.}  Applying
Proposition~\ref{pAllAuxiliary}~\ref{iPushForward}~\textit{iii}, to
\[
\xymatrix{
C_z \ar[r] \ar[d]^{\varphi_z} & C \ar[d]^{\varphi}\\
\Spec\, k(z) \ar[r] & D
}
\]
where $C_z$ is the scheme-theoretic fiber over $z$, we see that it suffices to prove the equality in Lemma \ref{lAdjointness} for $\varphi$ replaced by $\varphi_z$, $\alpha$ replaced by $\alpha_z$, the restriction of $\alpha$ to $C_z$. In the following we drop the subscripts $z$ again for ease of notation.

\medskip

\textbf{Step 2.} Abbreviating $k(z)$ to $L$, the fiber $C_z$ in Step 1 is a disjoint union of schemes of the form $Y_{x}:=\Spec \, \OO_{C, x}/((\pi_x)^{n_{x}})$ where $x\in C$ is a closed point with local ring the discrete valuation ring $ \OO_{C, x}$, with uniformiser $\pi_x$, $n_x\in \mathbb{N}$, and residue field $k (x)$ some finite extension field of $L$. Here $x$ runs over the closed points lying above $z$. By the bilinearity of the pairing and linearity of $\varphi_*$ again, it thus suffices to prove: given
\begin{gather}\label{dCurveCovering}
\xymatrix{
& Y_{x}\ar[ld] \ar[d]^{\varphi}  & ~\Spec \, \OO_{C, x}/(\pi_x)= \Spec(k (x)) \ar[ld]^{\psi} \ar@{_{(}->}[l]_(.75){\mathrm{red}}\\
\Spec\, k & \Spec\, L \ar[l] &   
}
\end{gather}
and a Brauer class $\alpha$ in $\Br(Y_x)$, then $\varphi_* (\alpha )$ is the same as $n$ times $\psi_* (\mathrm{red}^* (\alpha))$. Here $\mathrm{red}$ is the reduction map, and $\psi$ is induced by the inclusion of fields $L \subset k (x)$. We will also sometimes find it convenient to write $(Y_x)_{\mathrm{red}}$ for $\Spec(k (x))$ in the argument below. 

\medskip

\textbf{Step 3.} We prove the remaining assertion of Step 2. To ease
notation, we drop the subscript $x$, and denote $\OO_{C, x}$ simply by
$\OO$. Let $\alpha$ be a Brauer class in $\Br(\OO/(\pi^n))$, for the
given discrete valuation ring $\OO$ (which contains the field $L$)
with residue field $\kappa =\OO/(\pi)$ being a finite extension of
$L$, $[\kappa : L]=d$. Let $\bar{b}_1, \dots , \bar{b}_d$ be an
$L$-basis of $\kappa $, and let $b_1, \dots , b_d$ be lifts of these
basis elements to $\OO$. Then the elements
\[
b_1, \dots , b_d, \; \pi b_1, \dots, \pi b_d, \dots , \pi^{n-1}b_1, \dots , \pi^{n-1}b_d
\]
are an (ordered) $L$-basis of $\OO/(\pi^n)$. Abbreviate
$\sA=\OO/(\pi^n)$ and write an element $a\in \sA$ in that given
basis:
 \begin{align*}
a= & a_{0,1}b_1 +  \dots + a_{0, d}b_d \\
  & + (a_{1,1}b_1 +  \dots + a_{1, d}b_d)\pi  \\
 & \vdots \\
 & + (a_{n-1, 1}b_1 + \dots + a_{n-1, d}b_d) \pi^{n-1}.
\end{align*}
Let $\kappa=\bar{\sA}$ be the reduction of $\sA$, and let $\bar{a}\in \bar{\sA}$ be the image of $a$, which can consequently be written as 
\[
\bar{a}= a_{0,1}\bar{b}_1 +  \dots + a_{0, d}\bar{b}_d 
\]
in the (ordered) $L$-basis $\bar{b}_1, \dots , \bar{b}_d$ of $\bar{\sA}$. 

Denote by $m_a$ and $m_{\bar{a}}$ the $L$-linear maps in $\sA$ and
$\bar{\sA}$ given by multiplication by $a$ and $\bar{a}$. If in the
chosen ordered basis above, $m_{\bar{a}}$ has a matrix $M$, then in
the chosen ordered basis of $\sA$ the map $m_a$ will be represented by
a block lower triangular matrix of the form
\[
\begin{pmatrix}
M         & 0          & 0 &  \dots & 0 \\
N_{21} &         M & 0 & \dots & 0 \\
N_{31} & N_{32} & M & \dots & 0\\
\vdots & \vdots & \vdots & \ddots & \vdots \\
N_{n1} & N_{n2} & \dots & N_{n, n-1} & M 
\end{pmatrix}
\]
where $M$ and all matrices $N_{ij}$ are $d\times d$ matrices with entries in $L$. In particular, 
\[
\mathrm{Nm}_{\sA/L}(a) = \left( \mathrm{Nm}_{\bar{\sA}/L}(\bar{a})\right)^n .
\]
We can rephrase this by saying that in the set-up of the diagram (\ref{dCurveCovering}), we have that the norm map
\begin{gather}\label{fNormMapCurveCovering}
N : \varphi_* \Gms{Y_x} \to \Gms{\Spec(L)}
\end{gather}
can be factored into three maps:
\begin{gather}\label{fThreeMaps}
\xymatrix{
\varphi_* \Gms{Y_x} \ar[r]^(.35){\varrho} & \varphi_* \left(  \mathrm{red}_*\Gms{(Y_x)_{\mathrm{red}}} \right) \ar@{=}[r] & \psi_* \left(  \Gms{(Y_x)_{\mathrm{red}}} \right) \ar[r]^{\bar{a}\mapsto \bar{a}^n}  & \psi_* \left( \Gms{(Y_x)_{\mathrm{red}}} \right)\ar[d]^{\bar{N}} \\
      & & & \Gms{\Spec(L)}
}
\end{gather}
where $\varrho$ is induced by the reduction map $\mathrm{red}$ and $\bar{N}$ is again a norm map. Applying $\Het^2 (\Spec(L), - )$ to this sequence of homomorphism of sheaves of (multiplicative) abelian groups in the \'{e}tale topology, and noting that  
\[
\Het^2 (\Spec(L), \varphi_* \left( \Gms{Y_x} \right) ) \simeq \Br(Y_x)
\]
and
\[
\Het^2 \left(\Spec(L), \psi_* \left(  \Gms{(Y_x)_{\mathrm{red}}} \right)\right) \simeq \Br((Y_x)_{\mathrm{red}})
\]
by the Leray spectral sequence and the finiteness of $\varphi$ and $\psi$, as in the proof of Proposition \ref{pAllAuxiliary}, we obtain that for a Brauer class $\alpha$ in $\Br(Y_x)$, the norm $\varphi_* (\alpha )$ is indeed the same as $n$ times $\psi_* (\mathrm{red}^* (\alpha))$.
\end{proof}

\section{Proof of Theorem \ref{tMain}}\label{sProofThmMain}

Let $X$ be a smooth proper variety over a field $k$. Denote by
${\omega : X \to \Spec(k)}$ the structure morphism.  For any field
extension $F/k$, we have the above pairing
$$
\CH_0(X_F) \times \Br(X_F) \to \Br(F)
$$
and we will mostly be interested in the case when $F=k(X)$.

Let $\eta$ be the generic point of $X$, considered as a closed point
in $X_{k(X)}$.  Then ``pairing with $\eta$'' defines a map $\Br(X) \to
\Br(k(X))$ by pulling $\alpha \in \Br(X)$ back to $\Br(X_{k(X)})$ and
then pairing with $\eta$, thought of as an element in
$\CH_0(X_{k(X)})$.  This map coincides with the usual map restricting
a Brauer class to its own function field, which is injective for $X$
smooth by \cite[II,~Cor.~1.10]{Gro68}.

Now we assume that $X$ is universally $\CH_0$-trivial and proceed to
prove Theorem~\ref{tMain}, namely that $\Br(k)=\Br(X)$.  The same
proof shows that $\Br(F)=\Br (X_F)$ for any extension field $F/k$,
i.e., that the Brauer group of $X$ is universally trivial.  By
assumption, $X$ has a $0$-cycle $z_0$ of degree~1.

We can assume that the support of $z_0$ consists of closed points
whose residue fields are separable extensions of $k$.  Indeed,
answering a question of Lang and Tate from the late 1960s, it is a
result of Gabber, Liu, and
Lorenzini~\cite[Theorem~9.2]{gabber_liu_lorenzini:index} that a
regular generically smooth nonempty scheme of finite type over $k$
(e.g., our smooth proper variety $X$ over $k$) admits a 0-cycle of
minimal positive degree supported on closed points with separable
residue fields.  In our case, the minimal positive degree of a
$0$-cycle on $X$ is $1$.

Let $\alpha \in \Br(X)$ and define $\alpha_0 = \alpha(z_0) \in
\Br(k)$.  Then $(\alpha - \omega^*(\alpha_0))(z_0) = 0$ because
$(\omega^*(\alpha_0)) (z_0) = \deg (z_0) \cdot \alpha_0 = \alpha_0$.

Let $z_0' \in \CH_0(X_{k(X)})$ be the 0-cycle that $z_0$ determines on $X_{k(X)}$. 
 Since $X$ is
universally $\CH_0$-trivial, we have that $\eta = z_0'$ in
$\CH_0(X_{k(X)})$, as they are both 0-cycles of degree 1. Denote by $\alpha'$ and $\alpha_0'$ the pull-backs of $\alpha$ and $\omega^*(\alpha_0)$ to $\Br(X_{k(X)})$.   
Then
$$
0 = (\alpha' - \alpha_0')(\eta - z_0') = (\alpha' - \alpha_0')(\eta) - (\alpha' - \alpha_0')(z_0') = (\alpha' - \alpha_0')(\eta)
$$
by bilinearity and since $(\alpha' - \alpha_0')(z_0')$ is the
pull-back from $\Br(k)$ to $\Br(k(X))$ of $(\alpha -
\omega^*(\alpha_0))(z_0)$, which we know is zero.  Here we needed the
fact that $z_0$ is supported on closed points whose residue
fields are separable extensions of $k$ so that $z_0$ pulled back to
$X_{k(X)}$ is supported on reduced closed points, and hence
restricting a Brauer class and pushing forward to $\Spec(k(X))$ is the
same as pairing with the underlying cycle.  



But now $(\alpha' - \alpha_0')(\eta)$ is just the
class of $\alpha - \alpha_0$, restricted to $\Br(k(X))$.  Hence
$\alpha - \alpha_0 = 0$ in $\Br(k(X))$, and since the map $\Br(X) \to
\Br(k(X))$ is injective, we have that $\alpha - \alpha_0 = 0$ in
$\Br(X)$, i.e., $\alpha$ comes from $\Br(k)$.

\providecommand{\bysame}{\leavevmode\hbox to3em{\hrulefill}\thinspace}
\providecommand{\href}[2]{#2}

\end{document}